\newtheorem{theorem}{Theorem}[section]
\newtheorem{lemma}[theorem]{Lemma}
\newtheorem{proposition}[theorem]{Proposition}
\theoremstyle{definition}
\newtheorem{definition}[theorem]{Definition}
\newtheorem{remark}[theorem]{Remark}
\numberwithin{equation}{section}
\newcommand{\blankbox}[2]
\begin{document}
\title[Inclusion relations between modulation and Triebel-Lizorkin spaces]
{Inclusion relations between modulation and Triebel-Lizorkin spaces$^*$}
\author{WEICHAO GUO}
\address{School of Mathematical Sciences, Xiamen University,
Xiamen, 361005, P.R. China} \email{weichaoguomath@gmail.com}
\author{HUOXIONG WU}
\address{School of Mathematical Sciences, Xiamen University,
Xiamen, 361005, P.R. China} \email{huoxwu@xmu.edu.cn}
\author{GUOPING ZHAO}
\address{School of Applied Mathematics, Xiamen University of Technology,
Xiamen, 361024, P.R.China} \email{guopingzhaomath@gmail.com}
\thanks{$^*$ Partly supported by the NNSF of China (Grant Nos. 11371295, 11471041) and the NSF of Fujian Province of China (No. 2015J01025).}
\subjclass[2000]{46E35, 42B35.}
\keywords{Triebel-Lizorkin spaces; modulation spaces; inclusion relation. }

\begin{abstract}
In this paper, we obtain the sharp conditions of the inclusion relations between modulation spaces $M_{p,q}^s$ and Triebel-Lizorkin spaces $F_{p,r}$ for $p\leq 1$, which greatly improve and extend the results for the embedding relations between local Hardy spaces and modulation spaces obtained by Kobayashi, Miyachi and Tomita in [Studia Math. 192 (2009), 79-96].
\end{abstract}

\maketitle

\section{INTRODUCTION}
The modulation spaces $M_{p,q}^{s}$ were introduced by Feichtinger\cite{Feichtinger} in 1983 by the short-time Fourier transform.
One can find some motivations and basic properties in \cite{Feichtinger_Survey}.
Modulation spaces have a close relationship with the topics of time-frequency analysis (see \cite{Grochenig}),
and that they has been regarded as a appropriate spaces for the study of PDE (see \cite{RSW_2012}).
As function spaces associated with the uniform decomposition, modulation spaces have many beautiful properties,
for instance, the properties of product and convolution on modulation spaces (see \cite{Feichtinger_Banach convolution, Guo_Characterization, Guo_SCI China}),
the boundedness of unimodular multipliers on modulation spaces (see \cite{AKKL_JFA_2007, Miyachi, Zhao_Guo_nolinear analysis}).
One can also see \cite{Guo_Chen, WZG_JFA_2006, Wang_book, WH_JDE_2007} for the study of nonlinear evolution equations on modulation spaces.

A basic but important problem is to find the relations between modulation spaces and classical function spaces.
Many authors have paid their attentions to this topic, for example, one can see Gr\"{o}bner \cite{Grober_thesis}, Okoudjou \cite{K.Okoudjou_embedding_modulation_Classical}, Toft \cite{Toft_embedding,Toft_Continunity},
Sugimoto-Tomita \cite{Sugimoto_Tomita}, Masaharu-Sugimoto \cite{sobolev and modulation} and Wang-Huang \cite{Wang_Huang_JDE}.
In particular, Kobayashi-Miyachi-Tomita \cite{Lcal hardy and modulation_studia} studied the inclusion relations between local Hardy spaces $h_p$ and modulation spaces $M_{p,q}^s$ and established the following results.
\medskip

\hspace{-12pt}\textbf{Theorem A} (cf. \cite{Lcal hardy and modulation_studia})\quad
Let $0<p\leq 1$, $0<q\leq \infty$ and $s\in \mathbb{R}$.
Then, $M_{p,q}^{s} \subset h_{p}$
  if and only if one of the following conditions holds:
    \begin{enumerate}
     \item
     $1/p\leq 1/q, s\geq 0$;
     \item
     $1/p>1/q, s>n(1/p-1/q)$.
   \end{enumerate}
~\\
\textbf{Theorem B} (cf. \cite{Lcal hardy and modulation_studia})\quad
Let $0<p\leq 1$, $0<q\leq \infty$ and $s\in \mathbb{R}$.
Then, $h_{p} \subset M_{p,q}^s$
  if and only if one of the following conditions holds:
    \begin{enumerate}
     \item
     $1/q\leq 1/p, s\leq n(1-1/p-1/q)$;
     \item
     $1/q>1/p, s<n(1-1/p-1/q)$.
   \end{enumerate}

We recall that the local Hardy spaces $h_p\, (0<p<\infty)$, which was introduced by by Goldberg in \cite{Goldberg}, is equivalent with the inhomogeneous
Triebel-Lizorkin space $F_{p,r}$ with $r=2$. Note that the Triebel-Lizorkin spaces contain many important function spaces, such as Lebesgue spaces, Sobolev spaces, and Hardy spaces et al. It is natural to ask whether the inclusion relations above in Theorems A and B still hold if replaced $F_{p,2}$ by the general $F_{p,r}$ for $r>0$? This question will be addressed by our next theorems.

\begin{theorem}\label{theorem 1}
Let $0<p\leq 1$, $0<q,r\leq \infty$ and $s\in \mathbb{R}$.
Then, $M_{p,q}^{s} \subset F_{p,r}$
  if and only if one of the following conditions holds:
    \begin{enumerate}
     \item
     $1/p\leq 1/q, s\geq 0,  1/r\leq 1/q$;
     \item
     $1/p>1/q , s>n(1/p-1/q)$.
   \end{enumerate}
\end{theorem}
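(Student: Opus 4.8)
The plan is to treat sufficiency and necessity separately, in both cases reorganizing the uniform (modulation) decomposition into dyadic blocks and then reducing everything to inequalities between sequence spaces. Write $f=\sum_{k\in\mathbb Z^n}\Box_k f$ with $\Box_k$ the unit-cube frequency projections, and set $A_j:=\bigl(\sum_{|k|\sim 2^j}\|\Box_kf\|_p^p\bigr)^{1/p}$. The starting point is that the $j$-th Littlewood--Paley block sees only the cubes with $|k|\sim 2^j$, so $\Delta_j f=\varphi_j(D)\bigl(\sum_{|k|\sim 2^j}\Box_kf\bigr)$; using that a Fourier multiplier whose symbol is supported in a fixed-size ball with uniformly bounded derivatives is bounded on $L^p$ for band-limited inputs (uniformly in the rescaling, which is where the $p\le1$ machinery enters), together with the $p$-triangle inequality, one gets $\|\Delta_j f\|_p\lesssim A_j$ for all $j$.

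For sufficiency, I would bound $\|f\|_{F_{p,r}}$ by a sequence norm of $(A_j)$, splitting on whether $r\ge p$ (use $\ell^p\hookrightarrow\ell^r$ pointwise in $x$, landing on $\bigl(\sum_j A_j^p\bigr)^{1/p}$) or $r<p$ (use Minkowski's inequality in $L^{p/r}$, landing on $\bigl(\sum_j A_j^r\bigr)^{1/r}$). On the modulation side, $\|f\|_{M_{p,q}^s}^q\approx\sum_j 2^{jsq}\sum_{|k|\sim 2^j}\|\Box_kf\|_p^q\gtrsim\sum_j 2^{jsq}A_j^q$, where the last step uses $q\le p$ (so $\ell^q\hookrightarrow\ell^p$ inside each block). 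Under condition (1), $s\ge0$ and $1/r\le 1/q$ then give $\bigl(\sum_j A_j^r\bigr)^{1/r}\le\bigl(\sum_j A_j^q\bigr)^{1/q}\le\bigl(\sum_j 2^{jsq}A_j^q\bigr)^{1/q}$ directly. Under condition (2) one has $q>p$, so the $\sim2^{jn}$ cardinality of each block can be absorbed by Hölder's inequality in $k$, using $s>n(1/p-1/q)$, to obtain $M_{p,q}^s\hookrightarrow M_{p,p}^\beta$ for some $\beta>0$; and $M_{p,p}^\beta\hookrightarrow F_{p,r}$ holds for every $r$ because $\|\Delta_j f\|_p\lesssim A_j\lesssim 2^{-j\beta}\|f\|_{M_{p,p}^\beta}$ makes the $\ell^r$ (or $L^{p/r}$-Minkowski) sum over $j$ converge geometrically.

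For necessity I would construct, for each failing constellation of exponents, test functions with $\|f\|_{F_{p,r}}/\|f\|_{M_{p,q}^s}$ unbounded, built from modulated Schwartz bumps $\phi(\cdot-m)e^{2\pi i k\cdot}$ (frequency-localized near $k$): (i) a single such bump at frequency $|k|\sim 2^j$ with $j\to\infty$ handles the constellations with $s<0$; (ii) superposing bumps with a common envelope but distinct modulations over the $\sim2^{jn}$ cubes of a single dyadic annulus, with spatially spread-out centers, gives $\|f\|_{F_{p,r}}\approx\|f\|_p\approx 2^{jn/p}$ while $\|f\|_{M_{p,q}^s}\approx 2^{js}2^{jn/q}$, which rules out $s<n(1/p-1/q)$ when $1/p>1/q$; (iii) distributing such spread-out packets over many dyadic scales with tuned coefficients reduces the comparison to an $\ell^p$-versus-$\ell^q$ (resp. $\ell^r$-versus-$\ell^q$) inequality for the coefficient sequence, which captures the endpoint $s=n(1/p-1/q)$ and the condition $1/r\le 1/q$; and (iv) since $h_p=F_{p,2}$, Theorems A and B can be invoked to transfer known non-embeddings in the range $r\le 2$.

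The main obstacle I expect is item (iii) at the boundary: producing, when $1/p>1/q$ and $s=n(1/p-1/q)$, a single test function that fails to lie in $F_{p,r}$ for \emph{every} $r$, including $r=\infty$ (the largest of these spaces), since the usual $h_p$-type counterexample only addresses $r=2$. This requires the multi-scale construction together with a careful accounting of how the spatially separated packets combine inside the $L^p(\ell^r)$ norm, so that no cancellation between dyadic scales is thrown away; I also expect the endpoint cases with $q$ or $r$ equal to $\infty$ to need separate, slightly delicate treatment. A secondary technical point, used repeatedly in the sufficiency direction, is the uniform-in-$j$ boundedness of the rescaled block multipliers on band-limited $L^p$-functions for $0<p\le1$.
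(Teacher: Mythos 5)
Your sufficiency argument is correct and is essentially a self-contained version of the paper's route: the paper gets case (1) from the known embeddings $M_{p,q}\subset B_{p,q}\subset F_{p,q}\subset F_{p,r}$ (Lemma \ref{lemma, modulation and Besov spaces} plus $B_{p,q}\hookrightarrow F_{p,q}$ for $q\le p$) and case (2) from $M_{p,q}^{n(1/p-1/q)+2\epsilon}\subset M_{p,p}^{\epsilon}\subset F_{p,p}^{\epsilon}\subset F_{p,r}$, while you prove the underlying block estimates ($\|\Delta_jf\|_p\lesssim A_j$ via the $p\le1$ convolution inequality, then H\"older in $j$ and $k$) directly; both are fine. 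On the necessity side your test functions (modulated bumps, spatially separated within an annulus and across scales) are exactly the paper's Propositions \ref{proposition, necessity, 1}--\ref{proposition, necessity, 2}, and the difficulty you flag at the endpoint $s=n(1/p-1/q)$ is smaller than you fear: once the packets at different scales are translated far apart (the paper translates by $Nje_0$ and lets $N\to\infty$), the $F_{p,r}$ quasi-norm collapses to the $r$-independent quantity $\bigl(\sum_j b_j^p\,2^{jn}\bigr)^{1/p}$, including for $r=\infty$, so one multi-scale construction handles every $r$ at once and the endpoint follows from the sharp $\ell^q$-versus-$\ell^p$ sequence lemma (Lemma \ref{lemma, Sharpness of embedding, for uniform decomposition}).

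The genuine gap is in how you propose to obtain the condition $1/r\le 1/q$. Spread-out multi-scale packets cannot detect $r$ at all, precisely because of the collapse just described; so item (iii) as written will never produce an $\ell^r$-versus-$\ell^q$ comparison. What is needed is the opposite geometry, the paper's Proposition \ref{proposition, necessity, special}: bumps $h_j=M_{\xi_j}h$ with a \emph{common} spatial envelope and one frequency per dyadic annulus, all at the same location, so that $|h_j|=|h|$ and $\|\sum_j a_jh_j\|_{F_{p,r}}\sim\|\{a_j\}\|_{\ell^r}$ while the modulation norm gives $\|\{a_j\}\|_{\ell^q}$. Moreover, be aware that this comparison forces $1/r\le 1/q$ only in the unweighted case $s=0$: with the weight $2^{js}$, $s>0$, the inequality $\|\{a_j\}\|_{\ell^r}\lesssim\|\{2^{js}a_j\}\|_{\ell^q}$ holds for every $r$, and in fact the same H\"older/geometric-decay argument you use in case (2) shows $M_{p,q}^s\subset B_{p,\min(p,r)}\subset F_{p,r}$ for \emph{every} $r$ as soon as $s>0$ and $q\le p$. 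So no test function can yield a constraint on $r$ when $s>0$; the paper's necessity step correspondingly invokes the hypothesis only in the form $M_{p,q}\subset F_{p,r}$, and your write-up must likewise confine the restriction on $r$ to $s=0$ — attempting to prove it for all $s\ge 0$, as the literal statement of condition (1) suggests, is bound to fail.
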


\begin{theorem}\label{theorem 2}
Let $0<p\leq 1$, $0<q,r\leq \infty$ and $s\in \mathbb{R}$.
Then, $F_{p,r} \subset M_{p,q}^s$
  if and only if one of the following conditions holds:
    \begin{enumerate}
     \item
     $1/q\leq 1/p, s\leq n(1-1/p-1/q)$;
     \item
     $1/q>1/p, s<n(1-1/p-1/q)$.
   \end{enumerate}
\end{theorem}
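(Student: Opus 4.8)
The plan is to establish the sufficiency and the necessity of Theorem~\ref{theorem 2} separately. The reason $r$ is absent from the conclusion is that both halves of the argument can be arranged to be completely insensitive to $r$: for the sufficiency one reduces at once to $r=\infty$, using the trivial embedding $F_{p,r}\hookrightarrow F_{p,\infty}$ and the fact that (1)--(2) do not mention $r$; for the necessity one produces counterexamples whose $F_{p,r}$-quasinorm is, up to a constant, the $L^p$-quasinorm, hence essentially the same for every $r$ (this is the case for any distribution whose Fourier transform lives in a single dyadic annulus, and more generally for dyadic superpositions of such).

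For the sufficiency, let $f\in F_{p,\infty}$, write $f=\sum_{j\ge 0}\Delta_j f$ for the Littlewood--Paley decomposition (so $\widehat{\Delta_j f}$ is supported in $\{|\xi|\sim 2^j\}$, resp.\ $\{|\xi|\lesssim 1\}$ when $j=0$) and compute $\|f\|_{M_{p,q}^s}$ from the uniform frequency pieces $\Box_k$, noting that $\Box_k f$ only picks up the $O(1)$ blocks $\Delta_j f$ with $2^j\sim\langle k\rangle$. The elementary input is a band-limited $L^p$-multiplier estimate valid for $0<p\le 1$: if $\mathrm{supp}\,\hat g\subset\{|\xi|\le 2^j\}$ then $\|\Box_k g\|_{L^p}\lesssim 2^{jn(1/p-1)}\|g\|_{L^p}$. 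Summing this over the $\sim 2^{jn}$ cubes meeting a given annulus and inserting the weight $\langle k\rangle^s\sim 2^{js}$ gives
$$\|\Delta_j f\|_{M_{p,q}^s}\lesssim 2^{\,j(s+n(1/p+1/q-1))}\,\|\Delta_j f\|_{L^p}\lesssim 2^{\,j(s+n(1/p+1/q-1))}\,\|f\|_{F_{p,r}},$$
the last step using the pointwise bound $|\Delta_j f|\le(\sum_i|\Delta_i f|^r)^{1/r}$, valid for every $r$. When the exponent $s+n(1/p+1/q-1)$ is strictly negative --- which is exactly condition (2) together with the non-endpoint part of condition (1) --- summing the resulting geometric series in $j$ (via the quasi-triangle inequality for the $M_{p,q}^s$-quasinorm) yields $\|f\|_{M_{p,q}^s}\lesssim\|f\|_{F_{p,r}}$, for all $r$ at once.

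The delicate case --- and the step I expect to be the main obstacle --- is the endpoint $1/q\le 1/p,\ s=n(1-1/p-1/q)$: here the exponent above vanishes, the crude summation diverges, and the gain must come from the $\ell^q$-structure (which is why $q\ge p$ is needed). I would organise this by real interpolation: since $(F_{p,r},F_{p,r})_{\theta,q}=F_{p,r}$ and $\big(M_{p,p}^{\,n(1-2/p)},\,M_{p,\infty}^{\,n(1-1/p)}\big)_{\theta,q}=M_{p,q}^{\,n(1-1/p-1/q)}$ for $\theta=1-p/q$, the whole endpoint line reduces to the two extreme cases $q=\infty$ and $q=p$. The case $q=\infty$ is immediate from the displayed per-block estimate (the $2^{js}$-weight then cancels the $2^{jn(1/p-1)}$ and one takes a supremum over $k$). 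The case $q=p$, i.e.\ $F_{p,r}\subset M_{p,p}^{\,n(1-2/p)}$ for every $r$, is the genuinely hard one: the per-block estimate only yields $F_{p,p}\subset M_{p,p}^{\,n(1-2/p)}$, so one cannot argue block-by-block, and one must instead exploit how the blocks of an $F_{p,\infty}$-function overlap spatially, via the full Littlewood--Paley/maximal-function machinery (Peetre maximal functions adapted to the frequency scale and the Fefferman--Stein vector-valued maximal inequality) to absorb the loss. This is the technical heart of the theorem.

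For the necessity, by the contrapositive it suffices to show that the inclusion fails throughout the complement of (1)--(2), and I would do this with two families, each lying in $F_{p,r}$ for every $r$. First, to force $s\le n(1-1/p-1/q)$, take the wave packet $g_N(x)=e^{i\langle\xi_0,x\rangle}N^{n}\check\Psi(Nx)$ with $|\xi_0|\sim N$, so that $\widehat{g_N}$ sits in a single dyadic annulus; then $\|g_N\|_{F_{p,r}}\approx\|g_N\|_{L^p}\approx N^{\,n(1-1/p)}$ for every $r$, while $\|\Box_k g_N\|_{L^p}\approx 1$ for the $\sim N^{n}$ cubes meeting that annulus, whence $\|g_N\|_{M_{p,q}^s}\approx N^{\,s+n/q}$; letting $N\to\infty$ forces $s+n/q\le n(1-1/p)$, and this already rules out every point with $s>n(1-1/p-1/q)$ (for any $q$). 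Second, to rule out the endpoint when $1/q>1/p$, superpose such wave packets over dyadic scales, $f=\sum_j a_j g^{(j)}$ with $\widehat{g^{(j)}}$ in the annulus $\{|\xi|\sim 2^j\}$ and spatial profiles centred at the origin; with $c_j:=|a_j|\,2^{-jn(1/p-1)}$ one computes --- because the Littlewood--Paley square function of such an $f$ is dominated, for every $r$, by its largest term --- that $\|f\|_{F_{p,r}}\approx\|c\|_{\ell^p}$, and, at $s=n(1-1/p-1/q)$, that $\|f\|_{M_{p,q}^s}\approx\|c\|_{\ell^q}$; since $1/q>1/p$ one may choose $c\in\ell^p\setminus\ell^q$ (e.g.\ $c_j=j^{-1/q}$), so $F_{p,r}\not\subset M_{p,q}^s$. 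These two families exhaust the region where (1)--(2) fail, which completes the necessity.
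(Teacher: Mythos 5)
Your necessity argument and the easy parts of the sufficiency are sound and run essentially parallel to the paper: the single wave packet occupying one dyadic annulus gives $s\le n(1-1/p-1/q)$, and the dyadic superposition with $c\in\ell^p\setminus\ell^q$ kills the endpoint when $1/q>1/p$ (the paper does the same computation in its Propositions 3.1--3.2, except that it separates the dyadic pieces by large translations $Nje_0$ and uses almost orthogonality as $N\to\infty$, whereas you center them at the origin and rely on the geometric growth of the blocks; both work). Likewise the per-block estimate $\|\Delta_j f\|_{M_{p,q}^s}\lesssim 2^{j(s+n(1/p+1/q-1))}\|\Delta_jf\|_{L^p}$, geometric summation in the strict-inequality region, the $q=\infty$ endpoint, and the reduction of the whole endpoint line to the two extremes $q=\infty$ and $q=p$ by interpolation all mirror the paper's scheme (the paper reaches $q=\infty$ via $F_{p,\infty}\subset B_{p,\infty}\subset M_{p,\infty}^{n(1-1/p)}$ and also invokes ``a standard interpolation argument'').

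The genuine gap is the endpoint $q=p$, i.e.\ $F_{p,r}\subset M_{p,p}^{n(1-2/p)}$, equivalently $F_{p,\infty}^{n(2/p-1)}\subset M_{p,p}$. You correctly identify it as the technical heart, but you do not prove it: you only name ``Peetre maximal functions and the Fefferman--Stein vector-valued maximal inequality'' as machinery that should ``absorb the loss,'' without an argument, and it is not at all clear that this route works. The difficulty is not a maximal-function estimate internal to Triebel--Lizorkin theory; it is the passage from the $\ell^\infty$ structure over dyadic blocks to $\ell^p$ summability over the \emph{unit-cube} decomposition of frequency space, and the gain at fine scales must come from oscillation. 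The paper handles this (Proposition 3.4) by Triebel's atomic decomposition of $F^s_{p,q}$ for $0<p\le 1<q\le\infty$ (Lemma \ref{lemma, atom}): an $s$-atom is estimated in $M_{p,p}$ by the Bernstein multiplier theorem together with the localization lemma $\|f\|_{M_{p,q}}\sim\|\widehat f\|_{L_q}$ for compactly supported $f$ (Lemma \ref{lemma, localization}), while for an $(s,p,\infty)$-atom the crucial point is the scale-wise bound $\|g_\tau\|_{M_{p,p}}\lesssim |Q_{\nu k}|^{1/2-1/p}\,2^{-\tau n(1/p-1/2)}$, whose decay in $\tau$ (coming from the size/derivative normalization and moment conditions of the $(Q,s,p,\infty)$-atoms, via Bernstein and a dilation argument) makes $\sum_{\tau\ge\nu}\|g_\tau\|_{M_{p,p}}^p$ converge; one then sums over atoms with the $p$-triangle inequality and \eqref{for proof, atom, 1}. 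Without some substitute for this atomic mechanism your argument only yields $F_{p,p}\subset M_{p,p}^{n(1-2/p)}$, as you yourself note, so the proof of Theorem \ref{theorem 2} is incomplete at precisely the point that distinguishes it from a routine block-by-block computation.
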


\begin{remark} To make the reader more clearly understand our work in this paper,
we would like to make a comparison between our theorems and the main results in \cite{Lcal hardy and modulation_studia}, i.e. Theorem A and B.

Cleary, Theorems \ref{theorem 1} and \ref{theorem 2} in this paper are great improvement and extension of Theorems A and B. Also, in the technical level,
we have to work under the framework of Triebel-Lizorkin spaces, which is a lack of such a simple form of atoms as in $h_p$, and then the most important problem we face is how to estimate the more complicated atoms of Triebel-Lizorkin spaces in modulation spaces. In addition, we drop the method used in \cite{Lcal hardy and modulation_studia}, which is deeply depend on an equivalent norm of modulation space (see Lemma 2.2 in \cite{Lcal hardy and modulation_studia}),
and use the "standard" norm of modulation spaces. We would like to give an independent proof (quite different from \cite{Lcal hardy and modulation_studia}) on both sufficiency and necessity parts, which seems more efficient and clear than those in \cite{Lcal hardy and modulation_studia}.
\end{remark}

This paper is organized as follows. In Section 2, we will recall some basic notations and definitions, and present some preliminary lemmas, which will be used in our proofs. The proofs of our main results will be given in Section 3. Also, we will end this paper by presenting a open problem and its difficulties.

\section{PRELIMINARIES}

In this section, we first recall some notations. Let $C$ be a positive constant that may depend on $n,p,q,r,s.$
The notation $X\lesssim  Y$ denotes the statement that $X\leq CY$,
the notation $X\sim Y$ means the statement $X\lesssim Y \lesssim X$,
and the notation $X\simeq Y$ denotes the statement $X=CY$.
For a multi-index $k=(k_1,k_2,...k_n)\in \mathbb{Z}^{n}$,
we denote $|k|_{\infty}: =\mathop{sup}_{i=1,2...n}|k_i|$, and $\langle k\rangle: =(1+|k|^{2})^{{1}/{2}}.$

Let $\mathscr {S}:= \mathscr {S}(\mathbb{R}^{n})$ be the Schwartz space
and $\mathscr {S}':=\mathscr {S}'(\mathbb{R}^{n})$ be the space of tempered distributions.
We define the Fourier transform $\mathscr {F}f$ and the inverse Fourier transform $\mathscr {F}^{-1}f$ of $f\in \mathscr {S}(\mathbb{R}^{n})$ by
$$
\mathscr {F}f(\xi)=\hat{f}(\xi)=\int_{\mathbb{R}^{n}}f(x)e^{-2\pi ix\cdot \xi}dx
~~
,
~~
\mathscr {F}^{-1}f(x)=\hat{f}(-x)=\int_{\mathbb{R}^{n}}f(\xi)e^{2\pi ix\cdot \xi}d\xi.
$$

We recall some definitions of the function spaces treated in this paper.
\begin{definition}
Let $s\in \mathbb{R}, 0<p,q\leq \infty$. The weighted Lebesgue space $L_{x, p}^s$ consists of all measurable functions $f$ such that
\begin{numcases}{\|f\|_{L_{x, p}^s}=}
     \left(\int_{\mathbb{R}^n}|f(x)|^p \langle x\rangle^{ps} dx\right)^{{1}/{p}}, &$p<\infty$   \\
     ess\sup_{x\in \mathbb{R}^n}|f(x)\langle x\rangle^s|,  &$p=\infty$
\end{numcases}
is finite.
If $f$ is defined on $\mathbb{Z}^n$, we denote
\begin{numcases}{\|f\|_{l_{k,p}^{s,0}}=}
\left(\sum_{k\in \mathbb{Z}^n}|f(k)|^p \langle k\rangle^{ps}\right)^{{1}/{p}}, &$p<\infty$
\\
\sup_{k\in \mathbb{Z}^n}|f(k)\langle k\rangle^s|,\hspace{15mm} &$p=\infty$
\end{numcases}
and $l_{k,p}^{s,0}$ as the (quasi) Banach space of functions $f: \mathbb{Z}^n\rightarrow \mathbb{C}$ whose $l_{k,p}^{s,0}$ norm is finite.
If $f$ is defined on $\mathbb{N}=\mathbb{Z}^+\cup \{0\}$, we denote
\begin{numcases}{\|f\|_{l_{j,p}^{s,1}}=}
\left(\sum_{j\in \mathbb{N}}2^{jsp}|f(j)|^p\right)^{{1}/{p}}, &$p<\infty$
\\
\sup_{j\in \mathbb{N}}|2^{js}f(j)|,\hspace{15mm} &$p=\infty$
\end{numcases}
and $l_{j,p}^{s,1}$ as the (quasi) Banach space of functions $f: \mathbb{N}\rightarrow \mathbb{C}$ whose $l_{j,p}^{s,1}$ norm is finite.
We write $L_p^s, l_p^{s,0}, l_p^{s,1}$ for short, respectively, if there is no confusion.
\end{definition}

The translation operator is defined as $T_{x_0}f(x)=f(x-x_0)$ and
the modulation operator is defined as $M_{\xi}f(x)=e^{2\pi i\xi \cdot x}f(x)$, for $x, x_0, \xi\in\mathbb{R}^n$.
Fixed a nonzero function $\phi\in \mathscr{S}$, the short-time Fourier
transform of $f\in \mathscr{S}'$ with respect to the window $\phi$ is given by
\begin{equation}
V_{\phi}f(x,\xi)=\langle f,M_{\xi}T_x\phi\rangle,
\end{equation}
and that can written as
\begin{equation}
  V_{\phi}f(x,\xi)=\int_{\mathbb{R}^n}f(y)\overline{\phi(y-x)}e^{-2\pi iy\cdot \xi}dy
\end{equation}
if $f\in \mathscr{S}$.
We give the (continuous) definition of modulation space $\mathcal {M}_{p,q}^s$ as follows.

\begin{definition}
Let $s, t \in \mathbb{R}$, $0<p,q\leq \infty$. The (weighted) modulation space $\mathcal {M}_{p,q}^s$ consists
of all $f\in \mathscr{S}'(\mathbb{R}^n)$ such that the (weighted) modulation space norm
\begin{equation}
\begin{split}
\|f\|_{\mathcal {M}_{p,q}^{s}}&=\big\|\|V_{\phi}f(x,\xi)\|_{L_{x,p}}\big\|_{L_{\xi,q}^s}
\\&
=\left(\int_{\mathbb{R}^n}\left(\int_{\mathbb{R}^n}|V_{\phi}f(x,\xi)|^{p}\langle x\rangle^{tp}dx\right)^{{q}/{p}}\langle \xi\rangle^{sq}dx\right)^{{1}/{q}}
\end{split}
\end{equation}
is finite, with the usual modifications when $p=\infty$ or $q=\infty$.
This definition is independent of the choice of the window $\phi\in \mathscr{S}$.
\end{definition}
Applying the frequency-uniform localization techniques, one can give an alternative definition of modulation spaces (see \cite{Triebel_modulation space, WH_JDE_2007} for details).

We denote by $Q_{k}$ the unit cube with the center at $k$. Then the family $\{Q_{k}\}_{k\in\mathbb{Z}^{n}}$
constitutes a decomposition of $\mathbb{R}^{n}$.
Let $\rho \in \mathscr {S}(\mathbb{R}^{n}),$
$\rho: \mathbb{R}^{n} \rightarrow [0,1]$ be a smooth function satisfying that $\rho(\xi)=1$ for
$|\xi|_{\infty}\leq {1}/{2}$ and $\rho(\xi)=0$ for $|\xi|\geq 3/4$. Let
\begin{equation}
\rho_{k}(\xi)=\rho(\xi-k),  k\in \mathbb{Z}^{n}
\end{equation}
be a translation of \ $\rho$.
Since $\rho_{k}(\xi)=1$ in $Q_{k}$, we have that $\sum_{k\in\mathbb{Z}^{n}}\rho_{k}(\xi)\geq1$
for all $\xi\in\mathbb{R}^{n}$. Denote
\begin{equation}
\sigma_{k}(\xi)=\rho_{k}(\xi)\left(\sum_{l\in\mathbb{Z}^{n}}\rho_{l}(\xi)\right)^{-1},  ~~~~ k\in\mathbb{Z}^{n}.
\end{equation}
It is easy to know that $\{\sigma_{k}(\xi)\}_{k\in\mathbb{Z}^{n}}$
constitutes a smooth decomposition of $\mathbb{R}^{n}$, and $%
\sigma_{k}(\xi)=\sigma(\xi-k)$. The frequency-uniform decomposition
operators can be defined by
\begin{equation}
\Box_{k}:= \mathscr{F}^{-1}\sigma_{k}\mathscr{F}
\end{equation}
for $k\in \mathbb{Z}^{n}$.
Now, we give the (discrete) definition of modulation space $M_{p,q}^s$.

\begin{definition}
Let $s\in \mathbb{R}, 0<p,q\leq \infty$. The modulation space $M_{p,q}^s$ consists of all $f\in \mathscr{S}'$ such that the norm
\begin{equation}
\|f\|_{M_{p,q}^s}:=\left( \sum_{k\in \mathbb{Z}^{n}}\langle k\rangle ^{sq}\|\Box_k f\|_{p}^{q}\right)^{1/q}
\end{equation}
is finite. We write $M_{p,q}:=M_{p,q}^0$ for short.
\end{definition}

\begin{remark}
We remark that the above definition is independent of the choice of $\sigma$ (see \cite{Wang_book}).
So, we can use appropriate $\sigma$ according to our need.
In the definition above, the function sequence  $\{\sigma_k(\xi)\}_{k\in \mathbb{Z}^n}$ satisfies
$\sigma_k(\xi)=1$ and $\sigma_k(\xi)\sigma_l(\xi)=1$ if $k\neq l$, where $|\xi|_{\infty}\leq 1/4$.
We also recall that the definition of $\mathcal {M}_{p,q}^s$ and $M_{p,q}^s$ are equivalent.
In this paper, we mainly use the discrete definition of modulation space, i.e., $M_{p,q}^s$.
\end{remark}

Next, we recall function spaces associated with the dyadic decomposition of $\mathbb{R}^{n}$.
Let $\varphi(\xi)$ be a smooth bump function supported in the ball $\{\xi: |\xi|<\frac{3}{2}\}$ and be equal to 1 on the ball $\{\xi: |\xi|\leq \frac{4}{3}\}$.
Denote
\begin{equation}
\psi(\xi)=\varphi(\xi)-\varphi(2\xi),
\end{equation}
and a function sequence
\begin{equation}
\begin{cases}
\psi_j(\xi)=\psi(2^{-j}\xi),~j\in \mathbb{Z}^{+},
\\
\psi_0(\xi)=1-\sum_{j\in \mathbb{Z}^+}\psi_j(\xi)=\varphi(\xi).
\end{cases}
\end{equation}
For integers $j\in \mathbb{N}$, we define the Littlewood-Paley operators
\begin{equation}
\Delta_j=\mathscr{F}^{-1}\psi_j\mathscr{F}.
\end{equation}
Let $0< p,q\leq\infty$ and $s\in \mathbb{R}$. For $f\in\mathscr {S}'$, set
\begin{equation}
\|f\|_{B_{p,q}^s}=\left(\sum_{j=0}^{\infty}2^{jsq}\|\Delta_jf\|_{L_p}^q \right)^{1/q}.
\end{equation}
The (inhomogeneous) Besov space is the space of all tempered distributions $f$ for which the quantity $\|f\|_{B_{p,q}^s}$ is finite.

Let $0<p<\infty$, $0< q\leq\infty$ and $s\in \mathbb{R}$. For $f\in\mathscr {S}'$, set
\begin{equation}
\|f\|_{F_{p,q}^s}=\left\|\left(\sum_{j=0}^{\infty}2^{jsq}|\Delta_jf|^q \right)^{1/q}\right\|_{L_p}.
\end{equation}
The (inhomogeneous) Triebel-Lizorkin space is the space of all tempered distributions $f$ for which the quantity $\|f\|_{F_{p,q}^s}$ is finite.

\bigskip
Let $\mathbb{Q}^n$ be the collection of all cubes $Q_{\nu,k}$ in $\mathbb{R}^n$ with sides parallel to the axes,
centered at $2^{-\nu}k$, and with side length $l(Q_{\nu,k})=2^{-\nu}$, where $k\in \mathbb{Z}^n$, $\nu\in \mathbb{N}$.

Let $r>0$, we use $rQ$ to denote the cube in $\mathbb{R}^n$ concentric with $Q$ satisfying $l(rQ)=rl(Q)$.
We write $(\nu,k)<(\nu',k')$ if $\nu\geq \nu'$ and
\begin{equation}
  Q_{\nu,k}\subset 2 Q_{\nu',k'}\hspace{3mm} \text{with}\hspace{3mm}Q_{\nu,k}, Q_{\nu',k'}\in \mathbb{Q}^n.
\end{equation}
For $c\in \mathbb{R}$, we denote $c_{+}=\max\{c, 0\}$, and use $[c]$ to represent the largest integer less than or equal to $c$.
We recall some definitions about $s$-atom and $(Q,s,p,q)$-atom, which are very useful in our proofs.

\begin{definition}\label{definition, atom}
Let $0<p\leq 1<q\leq \infty$, $s\in \mathbb{R}$.
Let $K$ and $L$ be the integers with
\begin{equation}\label{for proof, definition of atom, 2}
  K\geq ([s]+1)_{+}\hspace{6mm}and\hspace{6mm}L\geq \max\{[n(1/p-1)_{+}-s],1\}
\end{equation}
    \begin{enumerate}
     \item
     A (complex-valued) function $a(x)$ is called a $s$-atom if
     \begin{equation}\label{for proof, definition of atom, 1}
       \textbf{supp}a\subset 5Q
     \end{equation}
     for some $Q=Q_{0k}\in \mathbb{Q}^n$ and
     \begin{equation}
       |D^{\alpha}a(x)|\leq 1\hspace{6mm}\text{for}\hspace{6mm}|\alpha|\leq K.
     \end{equation}
     \item
     Let $Q=Q_{\nu,k}\in \mathbb{Q}^n$. The function $a(x)$ is called a $(Q,s,p,q)$-atom if
     (\ref{for proof, definition of atom, 1}) is satisfied,
     \begin{equation}
       |D^{\alpha}a(x)|\leq |Q|^{-1/q+s/n-|\alpha|/n}\hspace{6mm}\text{for}\hspace{6mm}|\alpha|\leq K,
     \end{equation}
     and
     \begin{equation}
       \int_{\mathbb{R}^n}x^{\beta}a(x)dx=0\hspace{6mm}\text{for}\hspace{6mm}|\beta|\leq L.
     \end{equation}
     \item
     The distribution $g\in \mathscr{S}'$ is called an $(s,p,q)$-atom if
     \begin{equation}
       g=\sum_{(\mu,l)<(\nu,k)}d_{\mu l}a_{\mu l}(x)\hspace{10mm}(\text{convergence\ in\ }F_{p,q}^s)
     \end{equation}
     for some $\nu\in \mathbb{N}$ and $k\in \mathbb{Z}^n$, where $a_{\mu l}$ is a $(Q_{\mu l},s,p,q)$-atom and $d_{\mu l}$ are complex numbers with
     \begin{equation}
       \left(\sum_{(\mu,l)<(\nu,k)}|d_{\mu l}|^q\right)^{1/q}\leq |Q_{\nu k}|^{1/q-1/p}
     \end{equation}
     with usual modification if $q=\infty$.
   \end{enumerate}
\end{definition}

We also recall the atomic decomposition for $F_{p,q}^s$. One can find following lemma and its historic remarks in Triebel's book \cite[Section 3.2]{Triebel_book_1992}.
\begin{lemma}\label{lemma, atom}
  Let $0<p\leq 1<q\leq \infty$, $s\in \mathbb{R}$. Let $K$ and $L$ be fixed integers satisfying (\ref{for proof, definition of atom, 2}).
  A distribution $f\in \mathscr{S}'$ is an element of $F_{p,q}^s$ if and only if it can be represented as
  \begin{equation}
    \sum_{j=1}^{\infty}(\mu_ja_j+\lambda_jg_j)\hspace{10mm}(\text{convergence\ in\ }\mathscr{S}')
  \end{equation}
  where $a_j$ are $s$-atoms, $g_j$ are $(s,p,q)$-atoms, $\mu_j$ and $\lambda_j$ are complex numbers with
  \begin{equation}\label{for proof, atom, 1}
    \left(\sum_{j=1}^{\infty}|\mu_j|^p+|\lambda_j|^p\right)^{1/p}\lesssim \|f\|_{F_{p,q}^s}.
  \end{equation}
\end{lemma}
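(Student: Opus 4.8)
The plan is to prove the two implications of the equivalence separately: the \emph{synthesis} (sufficiency) direction, that any atomic sum lies in $F_{p,q}^s$ with norm controlled by the right-hand side of (\ref{for proof, atom, 1}), and the \emph{analysis} (necessity) direction, that every $f\in F_{p,q}^s$ admits such a representation. For the synthesis direction the whole matter reduces to the single-atom estimates $\|a\|_{F_{p,q}^s}\lesssim 1$ for every $s$-atom $a$ and $\|g\|_{F_{p,q}^s}\lesssim 1$ for every $(s,p,q)$-atom $g$, with constants independent of the underlying cube; granting these, the full bound follows from the $p$-subadditivity of $\|\cdot\|_{F_{p,q}^s}^p$ (valid since $p\le1$), since $\|\sum_j(\mu_ja_j+\lambda_jg_j)\|_{F_{p,q}^s}^p\le\sum_j\big(|\mu_j|^p\|a_j\|_{F_{p,q}^s}^p+|\lambda_j|^p\|g_j\|_{F_{p,q}^s}^p\big)\lesssim\sum_j|\mu_j|^p+|\lambda_j|^p$, together with a routine check of convergence in $\mathscr S'$.

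For a single $s$-atom I would write $\Delta_j a=a*\mathscr F^{-1}\psi_j$ and use that $\psi_j$ vanishes near the origin for $j\ge1$, so $\mathscr F^{-1}\psi_j$ has all vanishing moments; subtracting from $a$ its Taylor polynomial of degree $K-1$ and invoking the bounds $|D^\alpha a|\le1$ for $|\alpha|\le K$ gives $|\Delta_j a(x)|\lesssim 2^{-jK}$ with rapid off-diagonal decay away from a fixed dilate of the supporting cube, so that $\big\|(\sum_j 2^{jsq}|\Delta_j a|^q)^{1/q}\big\|_{L_p}\lesssim1$ because $K\ge([s]+1)_+>s$ and the resulting bound is essentially supported on a fixed-size cube. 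The $(s,p,q)$-atom estimate is the technical core: for $g=\sum_{(\mu,l)<(\nu,k)}d_{\mu l}a_{\mu l}$ one estimates $\Delta_j a_{\mu l}$ in two regimes — for $j\gtrsim\mu$ the smoothness bound $|D^\alpha a_{\mu l}|\le|Q_{\mu l}|^{-1/q+s/n-|\alpha|/n}$ yields a gain $2^{-(j-\mu)K}$, while for $j\lesssim\mu$ the vanishing moments up to order $L\ge[n(1/p-1)_+-s]$ yield a gain $2^{(j-\mu)(L+1)}$, both with polynomial spatial decay in $\mathrm{dist}(x,Q_{\mu l})$ — then one assembles the square function, uses the finite overlap and the nesting $Q_{\mu l}\subset 2Q_{\nu k}$ together with Hölder and a maximal-function estimate to sum in $(\mu,l)$, and finally invokes $(\sum_{(\mu,l)<(\nu,k)}|d_{\mu l}|^q)^{1/q}\le|Q_{\nu k}|^{1/q-1/p}$ to obtain $\|g\|_{F_{p,q}^s}\lesssim1$ uniformly.

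For the analysis direction I would start from a Calderón-type reproducing identity adapted to the dyadic decomposition: pick $\Phi,\phi\in\mathscr S$ with $\widehat\phi$ supported in an annulus (so all moments of $\phi$ vanish) such that $f=\Phi*\Phi*f+\sum_{\nu\ge1}\phi_\nu*\phi_\nu*f$ in $\mathscr S'$, with $\phi_\nu=2^{\nu n}\phi(2^\nu\cdot)$. Decomposing each convolution into pieces localized near the cubes $Q_{\nu,k}\in\mathbb Q^n$ and running a stopping-time argument over the level sets $\Omega_i=\{x:f^*(x)>2^i\}$ of the Peetre-type maximal square function $f^*$, the cubes falling into $\Omega_i\setminus\Omega_{i+1}$ are grouped into finitely overlapping Whitney-type families; on each family the suitably normalized local pieces of $\phi_\nu*f$ are, up to a constant multiple, $(Q,s,p,q)$-atoms — the moment conditions coming from $\phi$ having vanishing moments, the size and derivative conditions from pointwise control of $\phi_\nu*f$ by $f^*\lesssim2^{i+1}$ there — and the sum over one family is an $(s,p,q)$-atom $g_j$ with coefficient $\lambda_j\sim2^i|\Omega_i|^{1/p}$, while the low-frequency term $\Phi*\Phi*f$ is split into finitely many $s$-atoms. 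Then $\sum_j|\mu_j|^p+|\lambda_j|^p\lesssim\sum_i2^{ip}|\Omega_i|\sim\|f^*\|_{L_p}^p\sim\|f\|_{F_{p,q}^s}^p$ by the maximal-function characterization of $F_{p,q}^s$.

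The main obstacle is the single $(s,p,q)$-atom estimate — obtaining the two-sided Littlewood-Paley bounds on $\Delta_j a_{\mu l}$ that balance the high-frequency (smoothness order $K$) and low-frequency (moment order $L$) regimes with sufficient spatial decay, and then summing them over the nested cubes in $L_p$ for the entire range $0<p\le1$; it is exactly here that the lower bound $L\ge[n(1/p-1)_+-s]$ is forced and cannot be weakened. A secondary point is the careful verification of the various convergences (in $F_{p,q}^s$ for the $(s,p,q)$-atoms and in $\mathscr S'$ for the global sum) and the measurability of the Whitney families produced by the stopping-time construction.
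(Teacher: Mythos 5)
The paper does not prove this lemma at all: it is quoted verbatim from Triebel's book (Section 3.2 of \emph{Theory of Function Spaces II}), so there is no in-paper argument to compare against. Your outline is essentially the standard proof from that source and from Frazier--Jawerth: synthesis via uniform single-atom estimates ($s$-atoms through smoothness of order $K>s$, $(s,p,q)$-atoms through the high-frequency/low-frequency dichotomy balancing $K$ against the moments of order $L$, then $p$-subadditivity), and analysis via a Calder\'on reproducing formula plus a stopping-time/Whitney construction on the level sets of a Peetre-type maximal function, with $\sum_i 2^{ip}|\Omega_i|\sim\|f\|_{F_{p,q}^s}^p$. As a plan this is sound and correctly identifies where the hypotheses $K\geq([s]+1)_+$ and $L\geq[n(1/p-1)_+-s]$ enter; of course it remains a sketch, with the two technical cores (the uniform $(s,p,q)$-atom estimate summed over the nested cubes in $L_p$ for $p\leq1$, and the verification that the stopping-time families really produce atoms with the required coefficient normalization $\bigl(\sum|d_{\mu l}|^q\bigr)^{1/q}\leq|Q_{\nu k}|^{1/q-1/p}$) left unexecuted. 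One small correction: the low-frequency term $\Phi*\Phi*f$ cannot in general be written as \emph{finitely} many $s$-atoms; it must be split over the countable family of unit cubes $Q_{0k}$, $k\in\mathbb Z^n$, producing countably many $s$-atoms whose coefficients are then $\ell^p$-summable with bound $\lesssim\|f\|_{F_{p,q}^s}$. Given that the paper itself defers to Triebel, citing the reference (as the authors do) or carrying out your outline in full are both acceptable; your route adds nothing genuinely different from the cited proof.
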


We recall a inclusion relations between modulation and Besov spaces.

\begin{lemma}[see \cite{Wang_Huang_JDE}]\label{lemma, modulation and Besov spaces}
  Let $0<p,q\leq \infty$. Then the following two statements is true.
  \begin{enumerate}
    \item
    $M_{p,q}^s\subset B_{p,q} \Longleftrightarrow s\geq 0\vee [n(1/p-1/q)]\vee [n(1-1/p-1/q)]$;
    \item
    $B_{p,q}\subset M_{p,q}^s \Longleftrightarrow s\leq 0\wedge [n(1/p-1/q)]\wedge [n(1-1/p-1/q)]$.
  \end{enumerate}
\end{lemma}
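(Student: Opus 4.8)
The plan is to compare the two frequency decompositions directly. Set $\Lambda_j=\{k\in\mathbb Z^n:|k|_\infty\sim 2^j\}$ for $j\ge 1$ (and $\Lambda_0$ the finitely many $k$ near the origin), so that $\#\Lambda_j\sim 2^{jn}$, $\langle k\rangle\sim 2^j$ on $\Lambda_j$, $\operatorname{supp}\psi_j$ meets $Q_k$ only when $k\in\Lambda_j$, and each $Q_k$ meets $\operatorname{supp}\psi_i$ for only $O(1)$ values of $i$. Since the $\psi_j$ and $\sigma_k$ are translates and dilates of fixed Schwartz profiles, the Fourier multiplier theorem for band-limited functions (uniform in $j,k$, valid for every $0<p\le\infty$) together with Bernstein's inequality $\|\Box_k g\|_{p_1}\lesssim\|\Box_k g\|_{p_0}$ for $p_1\ge p_0$ supplies the exchange estimates $\|\Delta_j\Box_k f\|_p\lesssim\|\Box_k f\|_p$ and $\|\Box_k\Delta_j f\|_p\lesssim\|\Delta_j f\|_p$. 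Inserting these into $\Delta_j f=\sum_{k\in\Lambda_j}\Delta_j\Box_k f$ and $\Box_k f=\sum_{i:Q_k\cap\operatorname{supp}\psi_i\ne\emptyset}\Box_k\Delta_i f$ lets one estimate $\|\Delta_j f\|_p$ against $\{\|\Box_k f\|_p\}_{k\in\Lambda_j}$ and conversely, so that the whole problem reduces to a family of mixed-$\ell$-norm inequalities on each dyadic shell, glued together in $\ell^q_j$.

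For sufficiency in part~(1) one dominates, shell by shell, $\big(\sum_{k\in\Lambda_j}\|\Box_k f\|_p^{p\wedge 1}\big)^{1/(p\wedge 1)}$ by $\big(\sum_{k\in\Lambda_j}(\langle k\rangle^s\|\Box_k f\|_p)^q\big)^{1/q}$ and sums in $j$, the $\ell^q_j$-sum then collapsing to a single $\ell^q$-sum over $\mathbb Z^n$, i.e. to $\|f\|_{M^s_{p,q}}$. When $q\le p$ this is merely $\ell^p\hookrightarrow\ell^q$ combined with $\langle k\rangle^s\ge 1$, so $s\ge 0$ suffices; when $q>p$ (and $p\le 1$), Hölder on $\Lambda_j$ costs a factor $(\#\Lambda_j)^{1/p-1/q}\sim 2^{jn(1/p-1/q)}$, absorbed precisely when $s\ge n(1/p-1/q)$; and for $p>1$ the plain triangle inequality is too lossy and must be replaced by a C\'ordoba--Rubio de Francia square-function estimate over the unit cubes, which instead produces the loss $2^{jn(1/p'-1/q)_+}$ and hence the third threshold $n(1-1/p-1/q)$. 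The largest of $0,\ n(1/p-1/q),\ n(1-1/p-1/q)$ is then the stated bound. Part~(2) is run in the opposite direction: one bounds $\big(\sum_{k\in\Lambda_j}(\langle k\rangle^s\|\Box_k f\|_p)^q\big)^{1/q}$ by a power of $\|\Delta_j f\|_p$, passing through $L^2$ on the unit cubes via Bernstein, and the same three elementary inequalities now force $s$ below each of $0,\ n(1/p-1/q),\ n(1-1/p-1/q)$; for $1\le p,q<\infty$ it also follows from part~(1) by the dualities $(M^s_{p,q})^{*}=M^{-s}_{p',q'}$ and $(B^0_{p,q})^{*}=B^0_{p',q'}$.

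For necessity I would test three families of functions, one calibrated to each threshold. A single block $\hat f=\sigma_{k_0}$ with $|k_0|_\infty\sim 2^j$ has $\|f\|_{M^s_{p,q}}\sim 2^{js}$ and $\|f\|_{B_{p,q}}\sim 1$, forcing $s\ge 0$ in part~(1) and $s\le 0$ in part~(2). A spread-out sum $f=\sum_{k\in\Lambda_j}T_{v_k}M_k\check\sigma$, with $\{v_k\}_{k\in\Lambda_j}$ a fixed-gap set of $\sim 2^{jn}$ points, has essentially disjoint pieces $\Box_k f\approx T_{v_k}M_k\check\sigma$, so $\|f\|_{B_{p,q}}\sim\|f\|_p\sim 2^{jn/p}$ while $\|f\|_{M^s_{p,q}}\sim 2^{js}2^{jn/q}$, and the ratio $2^{j(n(1/p-1/q)-s)}$ forces $s\ge n(1/p-1/q)$ (invert the quotient for part~(2)). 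A shell bump, $\hat f$ smooth and supported in $\{|\xi|\sim 2^j\}$, gives $\|f\|_{B_{p,q}}\sim 2^{jn(1-1/p)}$ and $\|f\|_{M^s_{p,q}}\sim 2^{js}2^{jn/q}$, producing the threshold $n(1-1/p-1/q)$. I expect the main obstacle to be the regime $p<1$: there neither $\Box_k$ nor $\Delta_j$ is bounded on $L^p$ once one sums over the decomposition, so every step must be carried out with the band-limitedness kept explicit (through the quasi-Banach multiplier theorem and Bernstein's inequality), and the sharpness rests on the exact $L^p$-sizes of inverse Fourier transforms of bumps on cubes and on shells — precisely the factors $2^{jn/p}$ and $2^{jn(1-1/p)}$ above — together with making the square-function/aggregation step in part~(2) yield the correct power of $2^{jn}$.
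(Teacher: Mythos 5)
First, note that the paper does not prove this lemma at all: it is quoted from Wang--Huang \cite{Wang_Huang_JDE}, so there is no internal argument to compare yours with. Judged on its own, the necessity half of your sketch is sound and standard: the single block $\hat f=\sigma_{k_0}$, the widely separated family of $\sim 2^{jn}$ modulated bumps on the shell $|k|\sim 2^j$, and the shell bump $\hat f=\psi_j$ do calibrate the three thresholds $0$, $n(1/p-1/q)$, $n(1-1/p-1/q)$ (modulo routine almost-orthogonality arguments for the Schwartz tails, as in Propositions \ref{proposition, necessity, 1} and \ref{proposition, necessity, 2} of this paper).

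The sufficiency half, however, rests on a step that is false exactly where the lemma is delicate. The exchange estimate $\|\Box_k\Delta_j f\|_{L_p}\lesssim\|\Delta_j f\|_{L_p}$ is \emph{not} valid uniformly in $j$ for $0<p<1$: taking $f=\mathscr{F}^{-1}\psi_j$ one has $\|\Delta_j f\|_{L_p}\sim 2^{jn(1-1/p)}\to 0$, while $\Box_k\Delta_j f=\mathscr{F}^{-1}\sigma_k$ for the $\sim 2^{jn}$ cubes in the interior of the shell, so $\|\Box_k\Delta_j f\|_{L_p}\sim 1$; the correct bound carries the factor $2^{jn(1/p-1)}$ (this is Lemma \ref{lemma, convolution for p<1} with $R\sim 2^j$), and it is precisely this loss, multiplied by the cardinality factor $2^{jn/q}$, that produces the threshold $n(1-1/p-1/q)$ in part (2) when $p\le 1$. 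With your claimed uniform exchange estimate you would conclude $B_{p,q}\subset M^{-n/q}_{p,q}$ for $p<1$, which contradicts your own shell-bump computation, since $-n/q>n(1-1/p-1/q)$ when $p<1$. The suggested repair of ``passing through $L^2$ on the unit cubes via Bernstein'' does not work either: Nikolskii-type inequalities control higher Lebesgue exponents by lower ones, and the reverse inequality on a unit spectral cube fails for spread-out functions, so this route cannot reach the sharp exponent. There is a parallel inaccuracy in part (1) for $p>1$: the C\'ordoba--Rubio de Francia reverse square-function inequality is available only for $1<p\le 2$, and there the norm interchange must land in $\ell^p$ (loss $2^{jn(1/p-1/q)_+}$), not $\ell^2$; the loss $2^{jn(1/p'-1/q)_+}$ you quote is the $p\ge 2$ phenomenon and comes from $\|\sum_k g_k\|_{L_p}\lesssim(\sum_k\|g_k\|_{L_p}^{p'})^{1/p'}$ for unit-cube spectra (interpolation between $p=2$ and $p=\infty$, i.e.\ the $L_p\subset M_{p,p\vee p'}$-type estimates), not from a square function. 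So the architecture (shell-by-shell exchange plus aggregation, three calibrated test functions) is the right one and can be completed, but the quantitative core --- the band-limited convolution estimates with their $R^{n(1/p-1)}$ loss for $p<1$ and the $\ell^{p\wedge p'}/\ell^{p\vee p'}$ aggregation for $p>1$ --- is missing or misstated, and these are exactly the ingredients supplied in the cited reference \cite{Wang_Huang_JDE} (see also Sugimoto--Tomita \cite{Sugimoto_Tomita} and \cite{Wang_book}).
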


The following Bernstein multiplier theorem will be used in our proof.

\begin{lemma}[Bernstein multiplier theorem]
Let $0<p\leq 1$, $\partial^{\gamma}f\in L^2$ for $|\gamma|\leq [n(1/p-1/2)]+1$. Then,
\begin{equation}
  \|\mathscr{F}^{-1}f\|_{L_p}\lesssim \sum_{|\gamma|\leq [n(1/p-1/2)]+1}\|\partial^{\gamma}f\|_{L_2}.
\end{equation}
\end{lemma}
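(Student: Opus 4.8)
The plan is to deduce the $L_p$ bound from a \emph{weighted} $L^2$ estimate via Hölder's inequality, using crucially that $p\le 1$ makes a negative power of $\langle x\rangle$ integrable. Set $N=[n(1/p-1/2)]+1$ and $g=\mathscr{F}^{-1}f$. Since $N\ge 1$, the hypothesis in particular gives $f\in L^2$, so $g\in L^2$ is a genuine function. Write
\begin{equation}
\|g\|_{L_p}^p=\int_{\mathbb{R}^n}|g(x)|^p\langle x\rangle^{Np}\,\langle x\rangle^{-Np}\,dx,
\end{equation}
and apply Hölder's inequality with the conjugate pair $\big(\tfrac{2}{p},\tfrac{2}{2-p}\big)$ (legitimate because $0<p\le 1<2$) to obtain
\begin{equation}
\|g\|_{L_p}^p\le\left(\int_{\mathbb{R}^n}|g(x)|^2\langle x\rangle^{2N}\,dx\right)^{p/2}\left(\int_{\mathbb{R}^n}\langle x\rangle^{-\frac{2Np}{2-p}}\,dx\right)^{\frac{2-p}{2}}.
\end{equation}
The decisive point is that $N>n(1/p-1/2)$ is exactly equivalent to $\tfrac{2Np}{2-p}>n$, so the second factor is a finite constant depending only on $n,p$.

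Next I would estimate the weighted $L^2$ factor. Expanding $(1+|x|^2)^N$ by the multinomial theorem shows $\langle x\rangle^{2N}=\sum_{|\gamma|\le N}c_\gamma x^{2\gamma}$ with nonnegative coefficients, hence $\langle x\rangle^{2N}\lesssim\sum_{|\gamma|\le N}|x^\gamma|^2$ and
\begin{equation}
\int_{\mathbb{R}^n}|g(x)|^2\langle x\rangle^{2N}\,dx\lesssim\sum_{|\gamma|\le N}\|x^\gamma g\|_{L_2}^2.
\end{equation}
Now use the elementary Fourier identity $x^\gamma\mathscr{F}^{-1}f=c_\gamma'\mathscr{F}^{-1}(\partial^\gamma f)$ (with $|c_\gamma'|=(2\pi)^{-|\gamma|}$ for the normalization in this paper) together with Plancherel's theorem to get $\|x^\gamma g\|_{L_2}=|c_\gamma'|\,\|\partial^\gamma f\|_{L_2}$, which is finite by assumption. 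Combining the two displays yields
\begin{equation}
\|\mathscr{F}^{-1}f\|_{L_p}\lesssim\left(\sum_{|\gamma|\le N}\|\partial^\gamma f\|_{L_2}^2\right)^{1/2}\lesssim\sum_{|\gamma|\le[n(1/p-1/2)]+1}\|\partial^\gamma f\|_{L_2},
\end{equation}
which is the asserted inequality.

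This argument is essentially routine, so there is no serious obstacle; the only place requiring care is the exponent bookkeeping in the Hölder step, namely verifying that the power $\tfrac{2Np}{2-p}$ to which $\langle x\rangle$ is raised strictly exceeds $n$. This is precisely what the choice $N=[n(1/p-1/2)]+1$ guarantees (and at the endpoint $p=1$ it amounts to $N>n/2$). One should also record that $N\ge 1$ so that $f\in L^2$ and all manipulations with $\mathscr{F}^{-1}f$ are justified in $L^2$ (equivalently in $\mathscr{S}'$).
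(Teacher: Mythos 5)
Your proof is correct. The paper states this lemma without proof, recalling it as a standard fact, and your argument is exactly the standard one: H\"older (weighted Cauchy--Schwarz) with the weight $\langle x\rangle^{\pm Np}$, integrability of $\langle x\rangle^{-2Np/(2-p)}$ from $N=[n(1/p-1/2)]+1>n(1/p-1/2)$, and then Plancherel together with the identity $x^\gamma\mathscr{F}^{-1}f=c_\gamma'\mathscr{F}^{-1}(\partial^\gamma f)$; the exponent bookkeeping checks out, and the only cosmetic remark is that $f\in L^2$ is already the $\gamma=0$ case of the hypothesis, so no appeal to $N\ge 1$ is needed.
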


We recall a convolution inequality for $p\leq 1$.
\begin{lemma}[{Weighted convolution in $L_{p}$ with $p<1$}, see \cite{Triebel_book_1983}]\label{lemma, convolution for p<1}
Let $0<p< 1$,  $B(x_0,R)=\{x: |x-x_0|\le R\}$.
Suppose $f, g\in L^p$ with Fourier support in $B(x_0,R)$ and $B(x_1,R)$ respectively. Then there exists a constant $C>0$ which is independent of $x_0, x_1, R, f$ such that
$$
\||f|*|g|\|_{L_p} \le CR^{n(1/p-1)} \|f\|_{L_p} \|g\|_{L_p}.
$$
\end{lemma}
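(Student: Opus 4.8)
The plan is to reduce the statement, via its modulation-- and dilation--invariances, to a normalized inequality for functions band--limited to the unit ball, and then to replace the continuous convolution --- against which the $p$--quasi--triangle inequality is useless for $p<1$ --- by a \emph{discrete} one, exploiting that $\ell^p$ is a convolution algebra precisely because $p\le 1$. Since $|f|$ and $|g|$ are unchanged when $f$ is multiplied by $e^{-2\pi i x_0\cdot x}$ and $g$ by $e^{-2\pi i x_1\cdot x}$, we may assume $\widehat f,\widehat g$ are supported in $B(0,R)$. Writing $f_R(x)=f(x/R)$, $g_R(x)=g(x/R)$, one has $\widehat{f_R},\widehat{g_R}$ supported in $B(0,1)$, and a change of variables turns the desired inequality for $f,g$ into the $R=1$ inequality for $f_R,g_R$; tracking the Jacobians ($\|f_R\|_{L^p}^p=R^n\|f\|_{L^p}^p$ and $(|f_R|*|g_R|)(x)=R^n(|f|*|g|)(x/R)$) produces exactly the stated power $R^{n(1/p-1)}$. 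So it suffices to prove
\[
\||f|*|g|\|_{L^p}\le C\,\|f\|_{L^p}\,\|g\|_{L^p},\qquad \operatorname{supp}\widehat f,\ \operatorname{supp}\widehat g\subset B(0,1),
\]
with $C=C(n,p)$.

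For this normalized inequality I would use exact sampling. Fix a sufficiently fine lattice $\Gamma$ (e.g.\ $\Gamma=\tfrac14\mathbb Z^n$) and a function $\Theta\in\mathscr S$ with $\widehat\Theta\equiv1$ on $B(0,1)$ and $\widehat\Theta$ supported in a cube contained in a fundamental domain of the dual lattice of $\Gamma$; then $|\Theta(x)|\lesssim_M\langle x\rangle^{-M}$ for every $M$, and $f(x)=\sum_{k\in\Gamma}f(k)\,\Theta(x-k)$, $g(y)=\sum_{l\in\Gamma}g(l)\,\Theta(y-l)$ (the point values make sense because $f,g$ are continuous and bounded, by Nikolskii's inequality). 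Putting absolute values inside $(|f|*|g|)(x)=\int|f(x-y)||g(y)|\,dy$ and inserting these series gives the pointwise bound
\[
(|f|*|g|)(x)\le\sum_{k,l\in\Gamma}|f(k)|\,|g(l)|\,\Lambda(x-k-l),\qquad \Lambda:=|\Theta|*|\Theta|,
\]
where $\Lambda$ is again rapidly decreasing, hence $\Lambda\in L^p$.

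Grouping by $m=k+l$, rewrite the right side as $\sum_{m\in\Gamma}c_m\Lambda(\cdot-m)$ with $c_m=(a*b)(m)$, $a=(|f(k)|)_{k\in\Gamma}$, $b=(|g(l)|)_{l\in\Gamma}$. The $p$--subadditivity of $\|\cdot\|_{L^p}^p$ applied to the discrete sum in $m$ yields
\[
\||f|*|g|\|_{L^p}^p\le\|\Lambda\|_{L^p}^p\sum_{m\in\Gamma}|c_m|^p=\|\Lambda\|_{L^p}^p\,\|a*b\|_{\ell^p(\Gamma)}^p.
\]
Now the point is that $\ell^p$ is a convolution algebra for $0<p\le1$, since $\|a*b\|_{\ell^p}^p=\sum_m\big|\sum_k a_kb_{m-k}\big|^p\le\sum_m\sum_k|a_k|^p|b_{m-k}|^p=\|a\|_{\ell^p}^p\|b\|_{\ell^p}^p$; and the Plancherel--P\'olya inequality for band--limited functions gives $\|a\|_{\ell^p}^p=\sum_{k\in\Gamma}|f(k)|^p\lesssim\|f\|_{L^p}^p$ (and likewise for $b,g$), because $|f(k)|^p$ is controlled by the local integral of $|f|^p$ near $k$. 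Combining these estimates proves the normalized inequality, hence the lemma, with $C=C(n,p)$; in particular $C$ is independent of $x_0,x_1,R,f,g$.

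The conceptual crux, I expect, is exactly this passage from the continuous to the discrete picture: a direct attack on $\||f|*|g|\|_{L^p}$ via Young-type estimates is hopeless for $p<1$, and the role of band--limitedness is precisely to license the exact sampling identity that moves the convolution onto a lattice, where the inequality $p\le1$ becomes an asset rather than an obstruction. The remaining points --- the sampling formula on a sufficiently fine lattice, the rapid decay of $|\Theta|*|\Theta|$ (so $\Lambda\in L^p$), the Nikolskii $L^\infty$ bound, and the dilation bookkeeping --- are standard and routine.
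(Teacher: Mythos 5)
Your proof is essentially correct, but note that the paper itself offers no proof of this lemma: it is quoted as a known result from Triebel's book \cite{Triebel_book_1983}, where it belongs to the Plancherel--P\'olya--Nikolskii circle of ideas (Peetre-type maximal estimates for band-limited functions). Your route is a legitimate, self-contained alternative: modulation and dilation invariance reduce matters to spectra in $B(0,1)$ (and your Jacobian bookkeeping does yield exactly the factor $R^{n(1/p-1)}$); the exact sampling expansion converts the continuous convolution into a lattice convolution; and then the two genuinely $p\le 1$ ingredients --- the $p$-subadditivity of $\|\cdot\|_{L^p}^p$ and the fact that $\ell^p$ is a convolution algebra --- combine with Plancherel--P\'olya ($\sum_{k\in\Gamma}|f(k)|^p\lesssim\|f\|_{L^p}^p$) to finish. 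What this buys is transparency: the only nontrivial inputs are the sampling identity and Plancherel--P\'olya, whereas the textbook proof runs through maximal-function machinery. Two small points should be tidied up, neither of which affects validity: (i) with $\widehat\Theta\equiv 1$ on $B(0,1)$ the sampling identity carries the volume of a fundamental cell of $\Gamma$, i.e.\ for $\Gamma=\tfrac14\mathbb{Z}^n$ it reads $f=4^{-n}\sum_{k\in\Gamma}f(k)\Theta(\cdot-k)$; this constant is harmless, and the identity itself should be justified by observing that $f\in L^p\cap L^\infty\subset L^1$ (via Nikolskii), so periodization/Poisson summation applies, the series converging absolutely because $(f(k))_{k\in\Gamma}\in\ell^p\subset\ell^1$; (ii) the support condition on $\widehat\Theta$ should be made concrete, e.g.\ $\widehat\Theta$ supported in $[-2,2]^n$ and equal to $1$ on $B(0,1)$ works for $\Gamma=\tfrac14\mathbb{Z}^n$, whose dual lattice is $4\mathbb{Z}^n$, so that $\operatorname{supp}\widehat\Theta$ misses all translates $B(4m,1)$, $m\neq 0$. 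With these routine fixes the argument is complete and the constant depends only on $n$ and $p$, as required.
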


Finally, we recall some embedding lemmas.
\begin{lemma}[Sharpness of embedding, for uniform decomposition] \label{lemma, Sharpness of embedding, for uniform decomposition}
Suppose $0<q_1,q_2\leq \infty$, $s_1,s_2\in \mathbb{R}$. Then
\begin{equation}
l_{q_1}^{s_1,0}\subset l_{q_2}^{s_2,0}
\end{equation}
holds if and only if
\begin{equation}
\begin{cases}
s_2\leq s_1  \\
\frac{1}{q_2}+\frac{s_2}{n}< \frac{1}{q_1}+\frac{s_1}{n}
\end{cases}
\text{or} \hspace{10mm}
\begin{cases}
s_2=s_1\\
q_2=q_1.
\end{cases}
\end{equation}
\end{lemma}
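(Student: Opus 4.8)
The plan is to treat the two directions separately, exploiting the fact that $l_{q}^{s,0}$ is a weighted sequence space over the lattice $\mathbb{Z}^n$ with polynomial weight $\langle k\rangle^{s}$. For the sufficiency direction, I would first handle the trivial case $s_2=s_1,\ q_2=q_1$, where the embedding is an equality. For the genuinely nontrivial case, assume $s_2\le s_1$ and $\tfrac{1}{q_2}+\tfrac{s_2}{n}<\tfrac{1}{q_1}+\tfrac{s_1}{n}$. Split a sequence $c=(c_k)$ dyadically according to the size of $|k|_\infty$, i.e. write $A_j=\{k: 2^{j-1}\le \langle k\rangle< 2^{j}\}$ for $j\ge 1$ and $A_0=\{k:\langle k\rangle<1\}$; on $A_j$ the weight $\langle k\rangle^{s}$ is comparable to $2^{js}$ and $\#A_j\sim 2^{jn}$. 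When $q_2\ge q_1$ one uses the elementary nesting $\ell^{q_1}\hookrightarrow\ell^{q_2}$ on each annulus; when $q_2<q_1$ one applies Hölder's inequality on each $A_j$, paying a factor $(\#A_j)^{1/q_2-1/q_1}\sim 2^{jn(1/q_2-1/q_1)}$. Either way one arrives, after summing over $j$ in $\ell^{q_2}$ (or bounding the sup), at a geometric series in $j$ with ratio $2^{j[(s_2-s_1)+n(1/q_2-1/q_1)]}$, which converges precisely because the exponent is negative — that is exactly the hypothesis $\tfrac{1}{q_2}+\tfrac{s_2}{n}<\tfrac{1}{q_1}+\tfrac{s_1}{n}$. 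A small technical point: when $q_2>q_1$ one should still keep track of the weight difference $s_2\le s_1$ to avoid losing the sum; handling the mixed case cleanly is the one place where care is needed.

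For the necessity direction, assume $l_{q_1}^{s_1,0}\subset l_{q_2}^{s_2,0}$, so by the closed graph theorem there is $C$ with $\|c\|_{l_{q_2}^{s_2,0}}\le C\|c\|_{l_{q_1}^{s_1,0}}$ for all $c$. Testing on a single unit vector $e_k$ gives $\langle k\rangle^{s_2}\le C\langle k\rangle^{s_1}$ for all $k$, hence $s_2\le s_1$ by letting $|k|\to\infty$. To get the index condition, test on the indicator of an annulus: let $c=\mathbf{1}_{A_j}$. Then $\|c\|_{l_{q_1}^{s_1,0}}\sim 2^{js_1}2^{jn/q_1}$ and $\|c\|_{l_{q_2}^{s_2,0}}\sim 2^{js_2}2^{jn/q_2}$, so the embedding forces $2^{j(s_2+n/q_2)}\lesssim 2^{j(s_1+n/q_1)}$ uniformly in $j$, i.e. $s_2+n/q_2\le s_1+n/q_1$, equivalently $\tfrac{1}{q_2}+\tfrac{s_2}{n}\le\tfrac{1}{q_1}+\tfrac{s_1}{n}$. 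It remains to rule out the boundary case where equality holds in this last inequality but $(s_2,q_2)\ne(s_1,q_1)$. If equality holds and $s_2<s_1$, then $1/q_2>1/q_1$; I would construct a counterexample supported on a single large annulus $A_j$ with entries of unequal size — e.g. take $c_k=2^{-jn/q_1}|B|^{-1/q_1}$ on a sub-block $B\subset A_j$ of cardinality $\sim 2^{jn\theta}$ for a suitable $\theta\in(0,1)$ and zero elsewhere, optimize $\theta$ — so that the $l_{q_1}^{s_1}$ norm stays bounded while the $l_{q_2}^{s_2}$ norm blows up with $j$; the gap between $q_1$ and $q_2$ is what makes the Hölder loss strict. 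If instead $s_2=s_1$ but $q_2\ne q_1$, then necessarily $q_2<q_1$, and a fixed (non-annular) sequence in $\ell^{q_1}\setminus\ell^{q_2}$ concentrated near the origin already gives a counterexample since the weights are bounded there.

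The main obstacle I anticipate is not the two ``generic'' inequalities $s_2\le s_1$ and $\tfrac{1}{q_2}+\tfrac{s_2}{n}\le\tfrac{1}{q_1}+\tfrac{s_1}{n}$, which follow cleanly from testing on unit vectors and annular indicators, but rather the sharp separation of the strict-inequality regime from the boundary: one must show that on the critical line $\tfrac{1}{q_2}+\tfrac{s_2}{n}=\tfrac{1}{q_1}+\tfrac{s_1}{n}$ no embedding survives unless the two spaces coincide. This requires a tailored extremizer on a single annulus whose entry profile is tuned so that the dyadic series in the sufficiency argument is exactly at its divergence threshold. Since the weight is essentially constant on each annulus, this reduces to the classical fact that $\ell^{q_1}(\mathbb{Z}^n)\not\hookrightarrow\ell^{q_2}(\mathbb{Z}^n)$ when $q_1>q_2$, transplanted to finite blocks of growing cardinality; the bookkeeping of the cardinality exponent $\theta$ is the only delicate computation, and it is routine.
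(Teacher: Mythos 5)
Your sufficiency argument and the first two necessity tests are sound: the annular decomposition with H\"older inside each annulus (or, even more simply, a single global H\"older with exponent $1/r=1/q_2-1/q_1$ when $q_2<q_1$, and pointwise monotonicity of the weight plus $\ell^{q_1}\subset\ell^{q_2}$ when $q_2\ge q_1$) gives the embedding under $s_2\le s_1$ and the strict index inequality, and testing on unit vectors and on annulus indicators yields $s_2\le s_1$ and $s_2+n/q_2\le s_1+n/q_1$. (The paper itself only recalls this lemma without proof, so there is no in-paper argument to compare against.) The problem is the step you yourself identify as the crux: ruling out the critical line $s_2+n/q_2=s_1+n/q_1$ with $s_2<s_1$, $q_2<q_1$.

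Your proposed extremizer there does not work, and no single-annulus example can. For any sequence $c$ supported in one annulus $A_j$, whatever its profile, H\"older within $A_j$ gives $\|c\|_{l_{q_2}^{s_2,0}}\lesssim 2^{js_2}|A_j|^{1/q_2-1/q_1}\|c\|_{\ell^{q_1}(A_j)}\sim 2^{j[s_2+n(1/q_2-1/q_1)]}\|c\|_{\ell^{q_1}(A_j)}=2^{js_1}\|c\|_{\ell^{q_1}(A_j)}\sim\|c\|_{l_{q_1}^{s_1,0}}$ with a constant independent of $j$; in particular your block $c_k=2^{-jn/q_1}|B|^{-1/q_1}$ on $B\subset A_j$, $|B|\sim 2^{jn\theta}$, has norm ratio $\sim 2^{j(s_2-s_1)}|B|^{1/q_2-1/q_1}\le 1$ for every choice of $\theta\in(0,1]$, so optimizing $\theta$ yields nothing. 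The H\"older loss on a finite block is saturated by constant profiles and is exactly compensated on the critical line; the failure there is a cross-scale phenomenon, i.e. it comes from $\ell^{q_1}\not\hookrightarrow\ell^{q_2}$ in the annulus index $j$, not within one annulus. A correct counterexample is $c_k=\langle k\rangle^{-s_1-n/q_1}\bigl(\log(2+|k|)\bigr)^{-\beta}$ with $1/q_1<\beta\le 1/q_2$ (with $1/q_1=0$ when $q_1=\infty$), which lies in $l_{q_1}^{s_1,0}$ but not in $l_{q_2}^{s_2,0}$; equivalently, the truncated sequence $c_k=2^{-j(s_1+n/q_1)}$ for $k\in A_j$, $1\le j\le J$, satisfies $\|c\|_{l_{q_1}^{s_1,0}}\sim J^{1/q_1}$ and $\|c\|_{l_{q_2}^{s_2,0}}\sim J^{1/q_2}$, and $J^{1/q_2-1/q_1}\to\infty$. (Your separate sub-case $s_2=s_1$, $q_2<q_1$ is already excluded by the annulus test, since there the weak index inequality fails, so that remark is redundant though harmless.)
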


\begin{lemma}[Sharpness of embedding, for dyadic decomposition] \label{lemma, Sharpness of embedding, for dyadic decomposition}
Suppose $0<q_1,q_2\leq \infty$, $s_1,s_2\in \mathbb{R}$. Then
\begin{equation}
l_{q_1}^{s_1,1}\subset l_{q_2}^{s_2,1}
\end{equation}
holds if and only if
\begin{equation}
s_2<s_1
\hspace{10mm}
\text{or} \hspace{10mm}
\begin{cases}
s_2=s_1\\
1/q_2\leq 1/q_1.
\end{cases}
\end{equation}
\end{lemma}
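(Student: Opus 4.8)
The plan is to reduce the statement to elementary monotonicity properties of weighted $\ell^q$ spaces. For the sufficiency direction, given $f\in l_{q_1}^{s_1,1}$ I would put $a_j:=2^{js_1}|f(j)|$, so that $\|f\|_{l_{q_1}^{s_1,1}}=\|(a_j)_{j}\|_{\ell^{q_1}}=:A$ and $2^{js_2}|f(j)|=2^{-j(s_1-s_2)}a_j$. If $s_2=s_1$ and $1/q_2\le 1/q_1$, then $q_1\le q_2$ and the inequality $\|f\|_{l_{q_2}^{s_2,1}}\le\|f\|_{l_{q_1}^{s_1,1}}$ is just the nesting $\ell^{q_1}\subset\ell^{q_2}$. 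If $s_2<s_1$, write $\varepsilon:=s_1-s_2>0$; when $q_2\ge q_1$ I would use $a_j\le A$ to get $a_j^{q_2}\le A^{q_2-q_1}a_j^{q_1}$ and then absorb the geometric weight $2^{-j\varepsilon q_2}$ into $\sum_j a_j^{q_1}=A^{q_1}$, while when $q_2<q_1$ I would apply H\"{o}lder's inequality with exponent $q_1/q_2$ to bound $\sum_j 2^{-j\varepsilon q_2}a_j^{q_2}$ by $\big(\sum_j a_j^{q_1}\big)^{q_2/q_1}$ times a convergent geometric series. Each subcase yields $\|f\|_{l_{q_2}^{s_2,1}}\lesssim\|f\|_{l_{q_1}^{s_1,1}}$, so in particular $l_{q_1}^{s_1,1}\subset l_{q_2}^{s_2,1}$.

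For the necessity direction I would argue by contraposition: if neither condition holds, then either (a) $s_2>s_1$, or (b) $s_2=s_1$ and $q_2<q_1$, and in each case I construct an explicit $f\in l_{q_1}^{s_1,1}\setminus l_{q_2}^{s_2,1}$. In case (a), fix $\delta$ with $0<\delta<s_2-s_1$ and set $f(j)=2^{-js_1-\delta j}$; then $2^{js_1}|f(j)|=2^{-\delta j}$ is summable in $\ell^{q_1}$, whereas $2^{js_2}|f(j)|=2^{j(s_2-s_1-\delta)}$ has a strictly positive exponent and hence fails to lie in $\ell^{q_2}$. In case (b), setting $c_j:=2^{js_1}|f(j)|$ reduces the task to producing a sequence in $\ell^{q_1}\setminus\ell^{q_2}$; since $q_2<q_1$ forces $q_2<\infty$, one may take $c_j=(1+j)^{-\beta}$ with $1/q_1<\beta\le 1/q_2$, or $c_j\equiv1$ when $q_1=\infty$.

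The argument is entirely elementary and I do not expect a genuine obstacle; the only points needing care are the endpoint cases $q_i=\infty$, where all sums are replaced by suprema and the estimates above degenerate to the trivial ones, and the observation that for $q<1$ the quantities $\|\cdot\|_{l_q^{s,1}}$ are quasi-norms rather than norms, which is immaterial since every inequality used is a pointwise or monotonicity estimate. As an alternative to the explicit construction, necessity also follows from the closed graph theorem applied to the inclusion map, tested on the unit sequences concentrated at a single index $N$ (which forces $s_2\le s_1$) and on blocks of $M$ consecutive ones (which, in the case $s_2=s_1$, forces $1/q_2\le 1/q_1$); but the direct construction avoids invoking completeness of the spaces.
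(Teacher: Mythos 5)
Your argument is correct and complete. Note, however, that the paper itself offers no proof to compare against: Lemma \ref{lemma, Sharpness of embedding, for dyadic decomposition} is merely \emph{recalled} there as a known embedding fact (alongside the uniform-decomposition analogue), with no argument or explicit citation given. Your elementary proof therefore supplies exactly what the paper leaves implicit. The sufficiency part is handled correctly: for $s_2=s_1$, $q_2\geq q_1$ it is the standard nesting $\ell^{q_1}\subset\ell^{q_2}$, and for $s_2<s_1$ the absorption of the geometric factor $2^{-j(s_1-s_2)q_2}$ (directly when $q_2\geq q_1$, via H\"older with exponent $q_1/q_2$ when $q_2<q_1$) gives the quasi-norm bound; in fact the single chain $l_{q_1}^{s_1,1}\subset l_{\infty}^{s_1,1}\subset l_{q_2}^{s_2,1}$ would shorten this case, since $\sup_j 2^{js_1}|f(j)|\leq\|f\|_{l_{q_1}^{s_1,1}}$ always holds. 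The necessity part is also sound: your exponential example $f(j)=2^{-j(s_1+\delta)}$ with $0<\delta<s_2-s_1$ kills the case $s_2>s_1$ for every pair $(q_1,q_2)$ including the $\infty$ endpoints, and the power-weight sequence $(1+j)^{-\beta}$ with $1/q_1<\beta\leq 1/q_2$ (or the constant sequence when $q_1=\infty$) settles $s_2=s_1$, $q_2<q_1$; the closed-graph alternative you mention is unnecessary, as you say, since the direct construction avoids any completeness considerations.
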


\section{Proofs of Theorems \ref{theorem 1} and \ref{theorem 2}}

\begin{proposition}\label{proposition, necessity, 1}
  Let $0<p< \infty$, $0<q,r\leq \infty$, $s\in \mathbb{R}$.
  Then we have
  \begin{enumerate}
  \item
  $M_{p,q}^s\subset F_{p,r} \Longrightarrow l_q^{s,0}\subset l_p^{0,0}$,
  \item
  $F_{p,r}\subset M_{p,q}^s \Longrightarrow l_p^{0,0}\subset l_q^{s,0}$.
\end{enumerate}
\end{proposition}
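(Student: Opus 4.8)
The plan is to prove both implications by testing the inclusion on a suitable family of test functions whose frequency support is concentrated near a single lattice point $k$, so that the modulation norm reduces to a single term indexed by $k$, while the Triebel-Lizorkin norm is controlled from above and below by a power of $\langle k\rangle$. Concretely, fix a smooth bump $\varphi$ with $\widehat{\varphi}$ supported in the cube $Q_0$ centered at $0$, and for a finitely supported sequence $\{a_k\}_{k\in\mathbb{Z}^n}$ consider
\begin{equation}
f(x)=\sum_{k}a_k\, M_k\varphi(x)=\sum_k a_k e^{2\pi i k\cdot x}\varphi(x),
\end{equation}
so that $\widehat{f}$ is supported in $\bigcup_k(k+Q_0)$ and $\Box_k f=a_k M_k\varphi$ up to a harmless fixed constant coming from $\sigma_k\equiv 1$ on $Q_k$. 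Then $\|f\|_{M_{p,q}^s}\sim \big\|\langle k\rangle^s a_k\big\|_{l_q}=\|a\|_{l_q^{s,0}}$ with implied constants depending only on $\varphi,p,q,s,n$.

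For part (1), I would estimate $\|f\|_{F_{p,r}}$ from below by $\|f\|_{L_p}$ (using $F_{p,r}\hookrightarrow$ nothing directly, but rather that for a function with compact frequency support the $F_{p,r}$ norm dominates a fixed Besov/Lebesgue norm; more simply, $\|f\|_{F_{p,r}}\gtrsim \|\Delta_N f\|_{L_p}$ for any single $N$, or one uses that $F_{p,r}^0$ embeds into $B_{p,\infty}^0$ and the frequency pieces can be localized). A cleaner route: since $\widehat{f}$ lives in a fixed-width neighborhood of the integer lattice, only boundedly many Littlewood-Paley pieces $\Delta_j f$ overlap each dyadic annulus containing a given $k$, and on $k+Q_0$ the function behaves like $a_k e^{2\pi i k\cdot x}\varphi(x)$; choosing the $a_k$ supported on a dyadic-annulus-like set and exploiting that $\varphi$ is fixed, one gets $\|f\|_{F_{p,r}}\gtrsim \|a\|_{l_p}$ after summing in $L_p$ (the $\ell^r$ in the square function is irrelevant when only one scale is active at each point up to bounded overlap). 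Feeding this into $M_{p,q}^s\subset F_{p,r}$ gives $\|a\|_{l_p}\lesssim \|a\|_{l_q^{s,0}}$ for all finitely supported $a$, i.e. $l_q^{s,0}\subset l_p^{0,0}$.

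For part (2), I reverse the roles: I need the inclusion $F_{p,r}\subset M_{p,q}^s$ to force $\|a\|_{l_q^{s,0}}\lesssim \|a\|_{l_p}$. For this I want $\|f\|_{F_{p,r}}\lesssim \|a\|_{l_p}$ for the same test functions. This is the upper bound on the $F_{p,r}$ norm of a lattice-frequency-localized sum; it follows because each $\Delta_j f$ picks up only the $a_k$ with $|k|\sim 2^j$ (up to bounded overlap) times a fixed Schwartz profile, so $|\Delta_j f(x)|\lesssim \sum_{|k|\sim 2^j}|a_k|\,|\varphi_j(x)|$ with rapidly decaying $\varphi_j$, and by disjointness/bounded-overlap of the relevant index sets together with the $p\le 1$ triangle inequality $\|\cdot\|_{L_p}^p$ being subadditive, $\|f\|_{F_{p,r}}^p\lesssim \sum_j\|\Delta_j f\|_{L_p}^p\lesssim\sum_k|a_k|^p=\|a\|_{l_p}^p$; again the $\ell^r$-sum inside only helps since at most boundedly many scales are active pointwise. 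Combined with $\|f\|_{M_{p,q}^s}\sim\|a\|_{l_q^{s,0}}$ and the assumed inclusion, this yields $l_p^{0,0}\subset l_q^{s,0}$.

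The main obstacle is the two-sided comparison of $\|f\|_{F_{p,r}}$ with $\|a\|_{l_p^{0,0}}$, independently of $r$: one must verify carefully that for these lattice-frequency test functions the Littlewood-Paley square function collapses (up to bounded overlap in $j$) so that the $\ell^r$ index plays no role, and that the overlap of the cubes $k+Q_0$ and of the supports of $\Delta_j f$ is uniformly bounded. Handling the endpoint scales (the $\psi_0$ piece near the origin) and the quasi-norm triangle inequality when $p<1$ requires a little care but is routine. Everything else — the identification $\Box_k f=c\,a_k M_k\varphi$ and the computation $\|f\|_{M_{p,q}^s}\sim\|a\|_{l_q^{s,0}}$ — is immediate from the support properties of $\sigma_k$ and $\widehat{\varphi}$ recorded in the preliminaries.
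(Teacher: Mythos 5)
Your reduction of the modulation norm, $\|f\|_{M_{p,q}^s}\sim\|a\|_{l_q^{s,0}}$, is fine, but the two-sided comparison $\|f\|_{F_{p,r}}\sim\|a\|_{l_p^{0,0}}$ for $f=\sum_k a_k M_k\varphi$ is exactly where the argument breaks, because all of your frequency pieces share one and the same spatial bump $\varphi$. The lower bound needed for part (1) is false: take $n=1$, $p<1$, and $a_k=1$ for $2^j\le k<2^{j+1}$ (your ``dyadic-annulus-like'' choice). Then $\Delta_j f$ is essentially $\varphi$ times a Dirichlet kernel, $|\Delta_j f(x)|\lesssim\min(2^j,|x|^{-1})$ on the support of $\varphi$, so $\|f\|_{F_{p,r}}\sim\|\Delta_j f\|_{L_p}=O(1)$ (and $\sim j$ when $p=1$), while $\|a\|_{l_p^{0,0}}=2^{j/p}$: the coefficients concentrate at a single point instead of contributing separately to the $L_p$ norm, which is precisely the $p\le 1$ phenomenon one has to defeat. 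The upper bound needed for part (2) also fails when $r<p$: with the lacunary choice $a_{2^je_0}=1$ for $1\le j\le J$, each $\Delta_j f\approx e^{2\pi i2^jx_1}\varphi(x)$, so the inner $\ell^r$ sum gives $\|f\|_{F_{p,r}}\sim J^{1/r}\gg J^{1/p}=\|a\|_{l_p^{0,0}}$. Your heuristic that ``at most boundedly many scales are active pointwise'' is not available here: every $\Delta_j f$ lives on the same bump, so nothing collapses the $\ell^r$ index. Note also that the proposition is stated for all $0<p<\infty$, whereas your estimates lean on $p\le1$ subadditivity.

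The missing idea, and the one the paper uses, is spatial separation of the pieces: set $G_N=\sum_k a_k T_{Nk}g_k$ with $\widehat{g_k}=\widehat g(\cdot-k)$ and let $N\to\infty$. Translation does not change $\|\Box_k G_N\|_{L_p}=a_k\|g\|_{L_p}$, so $\|G_N\|_{M_{p,q}^s}\sim\|a\|_{l_q^{s,0}}$ uniformly in $N$; but as $N\to\infty$ the summands become almost disjointly supported in space, and then for every $0<p<\infty$ and every $r$ one gets $\lim_{N\to\infty}\|G_N\|_{F_{p,r}}=\bigl(\sum_k a_k^p\|g_k\|_{F_{p,r}}^p\bigr)^{1/p}\sim\|a\|_{l_p^{0,0}}$, because at each point only one far-separated piece is non-negligible, so the $\ell^r$ index genuinely becomes irrelevant. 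This $r$-independent, two-sided comparison is what both implications require, and it cannot be extracted from test functions carried by a single bump; to repair your proof you must build this (or an equivalent) spatial-separation mechanism into the construction.
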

\begin{proof}
Take $g$ to be smooth function with compact Fourier support on $B(0,10^{-10})$, $\widehat{g_k}=\widehat{g}(\cdot-k)$.
Denote
\begin{equation}
  G_N=\sum_{k\in \mathbb{Z}^n}a_kT_{Nk}g_k,
\end{equation}
where $\{a_k\}_{k\in \mathbb{Z}^n}$ is a truncated (only finite nonzero items) sequence of nonnegative real number,
$T$ is the translation operator,
$N$ is some large integer to be chosen later.
We have $\Box_kG_N=a_kT_{Nk}g_k$. In addition, we have $\|g_k\|_{F_{p,r}}\sim \|g_k\|_{L_p}$.

By the definition of modulation space $M_{p,q}$, we have
\begin{equation}
  \begin{split}
    \|G_N\|_{M_{p,q}^s}
    \sim
    \|\{\|\Box_kG_N\|_{L_p}\}\|_{l_q^{s,0}}
   & \sim
    \|\{a_k\|T_{Nk}g_k\|_{L_p}\}\|_{l_q^{s,0}}
    \\
    \sim
    \|\{a_kg\|_{L_p}\}\|_{l_q^{s,0}}
    \sim &
    \|\{a_k\}\|_{l_q^{s,0}}.
  \end{split}
\end{equation}
On the other hand, letting $N\rightarrow \infty$,
we use the almost orthogonality of $\{a_kT_{Nk}g_k\}_{k\in \mathbb{Z}^n}$ to deduce that
\begin{equation}
  \begin{split}
    \lim_{N\rightarrow \infty}\|G_N\|_{F_{p,r}}
    = &
    \lim_{N\rightarrow \infty}\left(\sum_{k\in \mathbb{Z}^n}a_k^p\|T_{Nk}g_k\|^p_{F_{p,r}}\right)^{1/p}
    \\
    \sim &
    \left(\sum_{k\in \mathbb{Z}^n}a_k^p\|g_k\|^p_{L_p}\right)^{1/p}
    \sim
    \left(\sum_{k\in \mathbb{Z}^n}a_k^p\right)^{1/p}
    \sim
    \|\{a_k\}\|_{l_p^{0,0}}.
  \end{split}
\end{equation}
Thus, if $M_{p,q}^s\subset F_{p,r}$, we deduce that
\begin{equation}
  \|\{a_k\}\|_{l_p^{0,0}} \sim \lim_{N\rightarrow \infty}\|G_N\|_{F_{p,r}}\lesssim \lim_{N\rightarrow \infty}\|G_N\|_{M_{p,q}^s}\sim \|\{a_k\}\|_{l_q^{s,0}}
\end{equation}
for any truncated sequence $\{a_k\}_{k\in \mathbb{Z}}$, which implies the desired inclusion relation $l_q^{s,0}\subset l_p^{0,0}$.

On the other hand, if $F_{p,r}\subset M_{p,q}^s$ holds, we conclude
\begin{equation}
  \|\{a_k\}\|_{l_q^{s,0}} \sim \lim_{N\rightarrow \infty}\|G_N\|_{M_{p,q}^s} \lesssim \lim_{N\rightarrow \infty}\|G_N\|_{F_{p,r}}\sim \|\{a_k\}\|_{l_p^{0,0}},
\end{equation}
which implies $l_p^{0,0}\subset l_q^{s,0}$.
\end{proof}

\begin{proposition}\label{proposition, necessity, 2}
  Let $0<p< \infty$, $0<q,r\leq \infty$, $s\in \mathbb{R}$.   Then we have
  \begin{enumerate}
  \item
  $M_{p,q}^s\subset F_{p,r} \Longrightarrow l_q^{s+n/q,1}\subset l_p^{n(1-1/p),1}$,
  \item
  $F_{p,r}\subset M_{p,q}^s \Longrightarrow l_p^{n(1-1/p),1}\subset l_q^{s+n/q,1}$.
\end{enumerate}
\end{proposition}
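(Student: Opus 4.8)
The plan is to run the scheme of Proposition~\ref{proposition, necessity, 1}, but now with test functions adapted to the \emph{dyadic} resolution. Fix $g\in\mathscr{S}$ whose Fourier transform $\widehat g$ is a nonnegative radial $C^\infty$ bump supported in a small ball $B(0,\delta_0)$ and identically $1$ near the origin, and for $j\in\mathbb{N}$ set
\[
b_j(x)=2^{jn}g(2^jx)\,e^{2\pi i\,2^je_1\cdot x},\qquad e_1=(1,0,\dots,0).
\]
Then $\widehat{b_j}(\xi)=\widehat g\bigl(2^{-j}(\xi-2^je_1)\bigr)$ is supported in a small ball inside the $j$-th dyadic corona, on which $\psi_j\equiv1$ while $\psi_m$ vanishes for $m\neq j$; hence $\Delta_m b_j=\delta_{mj}b_j$ and $\|b_j\|_{F_{p,r}}=\|b_j\|_{L_p}=2^{jn(1-1/p)}\|g\|_{L_p}$. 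Moreover the supports of the $\widehat{b_j}$ are pairwise disjoint and, starting the construction at a high enough frequency if needed, each $\Box_k$ meets at most one of them. On $\mathrm{supp}\,\sigma_k$ the symbol $\widehat{b_j}$ differs from the constant $\widehat g(2^{-j}(k-2^je_1))$ by an error whose $L^2$-derivatives of order $\le[n(1/p-1/2)]+1$ are $O(2^{-j})$, so the Bernstein multiplier theorem gives $\|\Box_k b_j\|_{L_p}\sim|\widehat g(2^{-j}(k-2^je_1))|$; since $\langle k\rangle\sim 2^j$ on the finite support, summing (and comparing $\sum_k|\widehat g(2^{-j}(k-2^je_1))|^q$ with $2^{jn}\int|\widehat g|^q$, the lower bound using $\widehat g\equiv1$ near $0$) yields $\|b_j\|_{M_{p,q}^s}\sim 2^{j(s+n/q)}$.

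Next fix a finitely supported sequence $\{c_j\}_{j\in\mathbb{N}}$ and put $f=\sum_j c_j b_j$. Since each $\Box_k$ meets at most one summand,
\[
\|f\|_{M_{p,q}^s}^q=\sum_j|c_j|^q\|b_j\|_{M_{p,q}^s}^q\sim\|\{c_j\}\|_{l_q^{s+n/q,1}}^q ,
\]
while $\Delta_m f=c_m b_m$ gives $\|f\|_{F_{p,r}}^p=\int_{\mathbb{R}^n}\bigl(\sum_m|c_m|^r|b_m(x)|^r\bigr)^{p/r}dx$. For the lower bound one restricts this integral to the pairwise disjoint annuli $\{2^{-m-1}<|x|\le2^{-m}\}$, on each of which $|b_m(x)|\sim 2^{mn}$ and the $\ell^r$-sum dominates its $m$-th term, obtaining $\|f\|_{F_{p,r}}^p\gtrsim\sum_m|c_m|^p 2^{mn(p-1)}$. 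For the upper bound one uses $|b_m(x)|\lesssim 2^{mn}(1+2^m|x|)^{-L}$ with $L$ large, so that on the annulus $\{|x|\sim2^{-m}\}$ the sum is $\lesssim\sum_l|c_l|^r2^{lnr}2^{-(l-m)_+Lr}$; inserting a geometric weight $2^{\pm\varepsilon|l-m|}$, applying Hölder in $l$ with exponents $p/r$ and $(p/r)'$ when $r<p$ (the case $r\ge p$ being simpler), and then summing the resulting geometric series in $m$, one gets $\|f\|_{F_{p,r}}^p\lesssim\sum_m|c_m|^p2^{mn(p-1)}$. Altogether $\|f\|_{F_{p,r}}\sim\|\{c_j\}\|_{l_p^{n(1-1/p),1}}$.

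With these two-sided estimates, both implications follow exactly as in Proposition~\ref{proposition, necessity, 1}. If $M_{p,q}^s\subset F_{p,r}$, then for every finitely supported $\{c_j\}$,
\[
\|\{c_j\}\|_{l_p^{n(1-1/p),1}}\sim\|f\|_{F_{p,r}}\lesssim\|f\|_{M_{p,q}^s}\sim\|\{c_j\}\|_{l_q^{s+n/q,1}},
\]
and letting truncations of an arbitrary sequence increase to it yields $l_q^{s+n/q,1}\subset l_p^{n(1-1/p),1}$; if $F_{p,r}\subset M_{p,q}^s$, the reversed chain gives $l_p^{n(1-1/p),1}\subset l_q^{s+n/q,1}$.

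The main obstacle is the two-sided control of $\|f\|_{F_{p,r}}$ for general $r$; this is precisely where the $\ell^r$-exponent --- absent in the local Hardy space $h_p=F_{p,2}$ --- enters, and in the range $r<p$ a crude termwise estimate is too lossy, so one really needs the dyadic-annulus decomposition together with the weighted Hölder argument sketched above. The verification that $\|b_j\|_{M_{p,q}^s}\sim 2^{j(s+n/q)}$ (in particular its lower bound via the lattice sum of $\widehat g$) and the handling of the finitely many small indices $j$ where the relevant frequency regions may have bounded overlap are routine.
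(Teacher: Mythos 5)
Your construction works and the two norm equivalences you derive are exactly the ones the paper needs, but you arrive at them by a genuinely different route. The paper takes $h_j$ with $\widehat{h_j}=\widehat h(\cdot/2^j)$ a bump on the \emph{whole} $j$-th dyadic annulus and, crucially, separates the blocks in physical space, testing with $F_N=\sum_{j\ge J} b_j T_{Nje_0}h_j$ and letting $N\to\infty$: in the limit the blocks have essentially disjoint spatial supports, so the $F_{p,r}$ norm decouples into $\bigl(\sum_j b_j^p\|h_j\|_{L_p}^p\bigr)^{1/p}$ for trivial reasons, and the exponent $r$ never enters; the modulation side is handled by counting the $\sim 2^{jn}$ unit cubes meeting the annulus (sets $\Gamma_j$, $\widetilde{\Gamma_j}$), with $\|\Box_k h_j\|_{L_p}\lesssim 1$ from the $p<1$ convolution lemma and $\|\Box_k h_j\|_{L_p}=\|\mathscr{F}^{-1}\sigma_k\|_{L_p}$ on $\widetilde{\Gamma_j}$. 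You instead keep all blocks at the origin, localize each $\widehat{b_j}$ to a small ball $B(2^je_1,\delta_0 2^j)$, and neutralize the $\ell^r$ index by exploiting the spatial concentration of $b_j$ at scale $2^{-j}$, via the dyadic-annulus decomposition of $x$-space and the weighted H\"older argument for $r<p$; your modulation-side computation (lattice count of $\sim 2^{jn}$ cubes, lower bound from the cubes where $\widehat g\equiv 1$, Bernstein in place of the convolution lemma) is essentially the paper's $\Gamma_j/\widetilde{\Gamma_j}$ argument. What each buys: the paper's translation-plus-almost-orthogonality trick makes the $F_{p,r}$ computation immediate but is an asymptotic argument; your test function is a single fixed lacunary-type sum with explicit estimates, at the price of the more technical two-sided control of $\|f\|_{F_{p,r}}$, precisely in the regime $r<p$ you single out. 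Two small repairs: on the annulus $\{2^{-m-1}<|x|\le 2^{-m}\}$ you cannot assert $|b_m(x)|\sim 2^{mn}$ pointwise, since $g$ (entire, with compactly supported $\widehat g$) may vanish at some $|y|\sim 1$; what you need, and what is true because $g\not\equiv 0$ is analytic, is $\int_{1/2<|y|\le 1}|g|^p\,dy>0$, which yields the same lower bound after integration. Likewise $\|\Box_k b_j\|_{L_p}\sim|\widehat g(2^{-j}(k-2^je_1))|$ only holds up to an additive $O(2^{-j})$ error, which is harmless after summation (it contributes $\lesssim 2^{jn}2^{-jq}$ to the $q$-sum); your stated upper bound together with the lower bound on the cubes where $\widehat g\equiv 1$ is exactly what is required, matching the paper.
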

\begin{proof}
  Choose a smooth function $h$ with compact Fourier support on $3/4\leq |\xi|\leq 4/3$,
satisfying $\hat{h}(\xi)=1$ on $7/8\leq |\xi|\leq 8/7$.
Denote $\widehat{h_j}(\xi)=\widehat{h}(\xi/2^j)$,
\begin{equation}
  \Gamma_j=\{k\in \mathbb{Z}^n: \Box_kh_j\neq 0 \},\quad\, \widetilde{\Gamma_j}=\{k\in \mathbb{Z}^n: \Box_kh_j=\mathscr{F}^{-1}\sigma_k \}.
\end{equation}
Obviously, we have $|\Gamma_j|\sim |\widetilde{\Gamma_j}|\sim 2^{jn}$ for $|j|\geq J$, where $J$ is a sufficient large number.
In addition, $k\sim 2^j$ for $k\in \Gamma_j$ or $k\in \widetilde{\Gamma_j}$.

Denote
$$F_N=\sum_{j\geq J}^{\infty}b_jT_{Nje_0}h_j,$$ where $\{b_j\}_{j=0}^{\infty}$ is a truncated (only finite nonzero items) sequence of nonnegative real number,
$e_0=(1,0,\cdots,0)$ is the unit vector of $\mathbb{R}^n$, $N$ is a sufficient large number to be chosen later.

We first estimate the norm of $M_{p,q}^s$.
Using lemma \ref{lemma, convolution for p<1}, we deduce that
\begin{equation}
  \|\Box_kh_j\|\lesssim 2^{jn(1/p-1)}\|h_j\|_{L_p}\cdot \|\mathscr{F}^{-1}\sigma_k\|_{L_p}\lesssim 1.
\end{equation}
Thus,
\begin{equation}
  \begin{split}
    \|F_N\|_{M_{p,q}^s}
    = &
    \left(\sum_{k\in \mathbb{Z}^n}\langle k\rangle^{sq}\|\Box_kF_N\|_{L_p}^q\right)^{1/q}
    \\
    \leq &
    \left(\sum_{j\geq J}\sum_{k\in \Gamma_j}\langle k\rangle^{sq}b_j^q\|\Box_kh_j\|_{L_p}^q\right)^{1/q}
    \\
    \lesssim &
    \left(\sum_{j\geq J}2^{jsq}b_j^q|\Gamma_j|\right)^{1/q}
    \lesssim
    \left(\sum_{j\geq J}2^{jsq}b_j^q2^{jn}\right)^{1/q}\sim \|\{b_j\}_{j\geq J}\|_{l_q^{s+n/q,1}}.
  \end{split}
\end{equation}
On the other hand,
\begin{equation}
  \begin{split}
    \|F_N\|_{M_{p,q}^s}
    \geq &
    \left(\sum_{j\geq J}\sum_{k\in \widetilde{\Gamma_j}}\langle k\rangle^{sq}b_j^q\|\Box_kh_j\|_{L_p}^q\right)^{1/q}
    \\
    =&
    \left(\sum_{j\geq J}\sum_{k\in \widetilde{\Gamma_j}}\langle k\rangle^{sq}b_j^q\|\mathscr{F}^{-1}\sigma_k\|_{L_p}^q\right)^{1/q}
    \\
    \sim &
    \left(\sum_{j\geq J}2^{jsq}b_j^q|\widetilde{\Gamma_j}|\right)^{1/q}
    \sim
    \left(\sum_{j\geq J}2^{jsq}b_j^q2^{jn}\right)^{1/q}\sim \|\{b_j\}_{j\geq J}\|_{l_q^{s+n/q,1}}.
  \end{split}
\end{equation}
Hence,
\begin{equation}
  \|F_N\|_{M_{p,q}^s}\sim \|\{b_j\}_{j\geq J}\|_{l_p^{n(1-1/p),1}}.
\end{equation}

Now, we turn to the estimate of $\|F_N\|_{F_{p,r}}$.
By the assumption of $h$, we have
$\Delta_jh_j=h_j$, where $\widehat{h_j}(\xi)=\hat{h}(\xi/2^j)$.
By the definition of $F_{p,r}$, we have
\begin{equation}
    \|F_N\|_{F_{p,r}} =
    \|\|\{\Delta_j F_N\}_{j\in \mathbb{N}}\|_{l_r^{0,1}}\|_{L_p}
     =
    \|\|\{b_j h_j\}_{j\geq J}\|_{l_r^{0,1}}\|_{L_p}.
\end{equation}
Using the almost orthogonality of $\{a_jT_{Nje_0}h_j\}_{j=0}^{\infty}$ as $N\rightarrow \infty$, we deduce
\begin{equation}
  \begin{split}
    \lim_{N\rightarrow \infty}\|F_N\|_{F_{p,r}}
    = &
    \left(\sum_{j\geq J}b_j^p\|h_j\|_{L_p}^p\right)^{1/p}
    \\
    \sim &
    \left(\sum_{j\geq J}b_j^p2^{jn(1-1/p)p}\right)^{1/p}\sim \|\{b_j\}_{j\geq J}\|_{l_p^{n(1-1/p),1}}.
  \end{split}
\end{equation}

If $M_{p,q}^s\subset F_{p,r}$ holds, we have
\begin{equation}
  \|F_N\|_{F_{p,r}}\lesssim \|F_N\|_{M_{p,q}^s}.
\end{equation}
Letting $N\rightarrow \infty$, we use the estimates of $\|F_N\|_{F_{p,r}}$ and $\|F_N\|_{M_{p,q}^s}$ obtained above to deduce that
\begin{equation}
  \|\{b_j\}_{j\geq J}\|_{l_p^{n(1-1/p),1}}\lesssim \|\{b_j\}_{j\geq J}\|_{l_q^{s+n/q,1}}
\end{equation}
for all truncated sequence $\{b_j\}_{j\in \mathbb{N}}$, which implies the desired inclusion relation $l_q^{s+n/q,1}\subset l_p^{n(1-1/p),1}$.

Similarly, if $F_{p,r}\subset M_{p,q}^s$ holds, we deduce
\begin{equation}
  \|\{b_j\}_{j\geq J}\|_{l_q^{s+n/q,1}}\lesssim |\{b_j\}_{j\geq J}\|_{l_p^{n(1-1/p),1}}
\end{equation}
for all truncated sequence $\{b_j\}_{j\in \mathbb{N}}$, which implies $l_p^{n(1-1/p),1}\subset l_q^{s+n/q,1}$.
\end{proof}

\begin{proposition}\label{proposition, necessity, special}
  Let $0<p< \infty$, $0<q,r\leq \infty$. Then we have
  $$
  M_{p,q}\subset F_{p,r} \Longrightarrow l_q^{0,1}\subset l_r^{0,1}.
  $$
\end{proposition}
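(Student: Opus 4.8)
The plan is to construct a single test function concentrated in frequency near one dyadic shell, but spread out over the uniformly spaced boxes $\Box_k$ inside that shell, so that the modulation norm and the Triebel-Lizorkin norm each reduce to a different $\ell_r$-type sum over the dyadic parameter, forcing the inclusion $l_q^{0,1}\subset l_r^{0,1}$. Concretely, I would fix for each $j\geq J$ a smooth function $h_j$ with $\widehat{h_j}$ supported in the shell $3/4\leq 2^{-j}|\xi|\leq 4/3$ (so $\Delta_j h_j=h_j$), and then build, inside that shell, a superposition of translated bumps $\Box_k h_j$ over $k$ ranging over a lattice subset $\widetilde{\Gamma_j}$ of cardinality $\sim 2^{jn}$, with the translations chosen so different $k$'s are well separated in physical space. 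Summing over $j$ with coefficients $b_j$ and inserting large separating translations $T_{Nje_0}$ between the dyadic blocks (as in Proposition \ref{proposition, necessity, 2}), I pass to the limit $N\to\infty$ to decouple the blocks.

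The key steps, in order: (i) estimate $\|F_N\|_{M_{p,q}}$. Since $F_N$ restricted to frequency shell $j$ is a sum of $\sim 2^{jn}$ translated pieces $\Box_k h_j$ each of unit $L_p$-size (by Lemma \ref{lemma, convolution for p<1}, exactly as in Proposition \ref{proposition, necessity, 2}), the weight $\langle k\rangle^{0}=1$ here plays no role, and one gets $\|F_N\|_{M_{p,q}}\sim \bigl(\sum_{j\geq J} b_j^q 2^{jn}\bigr)^{1/q}\sim \|\{b_j 2^{jn/q}\}\|_{l_q^{0,1}}$ up to the obvious normalization of $b_j$. (ii) Estimate $\|F_N\|_{F_{p,r}}$ as $N\to\infty$. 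Because $\Delta_j F_N = b_j (\text{sum of translated bumps in shell } j)$ and, after sending $N\to\infty$, the dyadic blocks are supported in disjoint regions of physical space (almost orthogonality in $L_p$ for $p\le 1$, same mechanism as before), the $\ell_r$-sum over $j$ inside the $L_p$-norm becomes, pointwise, a single nonzero term at each $x$; hence $\lim_N\|F_N\|_{F_{p,r}}\sim \bigl(\sum_{j\geq J} b_j^p \,\|\,(\text{shell-}j\text{ block})\,\|_{L_p}^p\bigr)^{1/p}$, and the shell-$j$ block has $L_p$-norm $\sim 2^{jn/q}$ by the same counting as in step (i) (the $2^{jn}$ bumps are disjointly supported, each of unit size, so their $L_p$ sum is $(2^{jn})^{1/p}$ — I must choose the normalization so this matches $2^{jn/q}$; alternatively renormalize $b_j$). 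The upshot is that $\lim_N\|F_N\|_{F_{p,r}}$ is controlled \emph{from below} by a quantity comparable to $\|\{c_j\}\|_{l_r^{0,1}}$ for a freely-chosen truncated sequence $c_j$, while $\|F_N\|_{M_{p,q}}\sim\|\{c_j\}\|_{l_q^{0,1}}$. (iii) Feed $M_{p,q}\subset F_{p,r}$, i.e. $\|F_N\|_{F_{p,r}}\lesssim\|F_N\|_{M_{p,q}}$, let $N\to\infty$, and read off $\|\{c_j\}\|_{l_r^{0,1}}\lesssim\|\{c_j\}\|_{l_q^{0,1}}$ for every truncated nonnegative sequence, which is exactly $l_q^{0,1}\subset l_r^{0,1}$.

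The main obstacle I anticipate is getting the $F_{p,r}$ lower bound right: I need the $\ell_r$ (over $j$) norm inside the $L_p$ (over $x$) norm to genuinely see the $\ell_r$ structure rather than collapsing. Two things make this work: first, the separating translations $T_{Nje_0}$ guarantee that for distinct $j$ the shell-$j$ blocks have essentially disjoint physical supports in the limit, so at each point $x$ at most one $|\Delta_j F_N(x)|$ is nonzero and the inner $\ell_r$ norm is just that single term — this is why one ends up with an $\ell_p$ sum rather than $\ell_r$, and so the argument actually produces $l_q^{0,1}\subset l_p^{0,1}\hookrightarrow l_r^{0,1}$ only when $r\ge p$; to capture the full $l_q^{0,1}\subset l_r^{0,1}$ I instead keep the blocks \emph{overlapping} in a controlled way (do not send $N\to\infty$ fully, or rather superpose the shells over a common region), so that the inner $\ell_r$-sum is a genuine sum of $\sim$ all terms, giving $\|F\|_{F_{p,r}}\sim\|\,(\sum_j |\,\text{block}_j\,|^r)^{1/r}\|_{L_p}\sim\|\{c_j\}\|_{l_r^{0,1}}$ by the disjoint \emph{frequency} supports making the blocks behave like independent pieces whose absolute values add up coherently on a fixed cube. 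Balancing "disjoint in frequency, so modulation norm counts boxes" against "coherent in space, so $F_{p,r}$ norm sees the $\ell_r$-sum" is the delicate point; once the geometry is arranged, steps (i)–(iii) are routine given Lemma \ref{lemma, convolution for p<1} and the almost-orthogonality already used above.
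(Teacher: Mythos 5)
There is a genuine gap, and you have in fact located it yourself: your construction (the $F_N$ of Proposition \ref{proposition, necessity, 2}, with $\sim 2^{jn}$ boxes per dyadic shell and separating translations $T_{Nje_0}$) produces, after $N\to\infty$, an $\ell_p$-sum over $j$ inside the $F_{p,r}$ norm, hence only $l_q^{0,1}\subset l_p^{0,1}$, which misses the point of the proposition (detecting $r$). The proposed repair --- keep the shells overlapping so that the inner $\ell_r$-sum is ``genuine'' --- is not carried out, and as described it does not work. If you do not separate the pieces, the shell-$j$ block reassembles into (essentially) $b_j h_j$ with $|h_j(x)|=2^{jn}|h(2^jx)|$: the spatial envelopes of different shells live at different scales $2^{-j}$ with heights $2^{jn}$, so $\bigl\|\bigl(\sum_j b_j^r|h_j|^r\bigr)^{1/r}\bigr\|_{L_p}$ does not factor as a weighted $\ell_r$-norm of $\{b_j\}$; for such self-similar lacunary profiles the mixed norm is again governed by an $\ell_p$-type quantity in $j$. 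There is also a normalization obstruction you wave at (``renormalize $b_j$'') but cannot wave away: to conclude the unweighted inclusion $l_q^{0,1}\subset l_r^{0,1}$ you need the $j$-dependent factors appearing in $\|F\|_{M_{p,q}}$ (your $2^{jn/q}$ from counting $|\Gamma_j|\sim 2^{jn}$ boxes) and in $\|F\|_{F_{p,r}}$ to be \emph{the same}; if they differ by a power $2^{j\alpha}$, the resulting estimate is an embedding between shifted spaces $l_q^{\alpha_1,1}\subset l_r^{\alpha_2,1}$, which by Lemma \ref{lemma, Sharpness of embedding, for dyadic decomposition} puts no constraint relating $r$ to $q$ unless $\alpha_1=\alpha_2$.

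The paper's proof sidesteps all of this with a much simpler test function: fix one smooth $h$ with $\widehat{h}$ supported in $B(0,10^{-10})$, and for each $j$ take $h_j=M_{\xi_j}h$ with $\xi_j\in\mathbb{Z}^n$, $|\xi_j|\sim 2^j$, chosen so that $\Delta_j h_j=h_j$ and $\Delta_l h_j=0$ for $l\neq j$; set $H=\sum_j a_jh_j$ (no spatial translations, no limit in $N$, no counting of $2^{jn}$ boxes). Since each $h_j$ is a pure modulation of the \emph{same} bump, $|h_j|=|h|$ identically, so the $F_{p,r}$ norm factors exactly: $\|H\|_{F_{p,r}}=\|(\sum_j a_j^r)^{1/r}|h|\|_{L_p}\sim\|\{a_j\}\|_{l_r^{0,1}}$, while on the modulation side the tiny, pairwise-disjoint Fourier supports at lattice points give $\|H\|_{M_{p,q}}\sim\|\{a_j\}\|_{l_q^{0,1}}$ with weight $\equiv 1$ on both sides. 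This ``one bump per shell, identical spatial modulus'' choice is exactly the missing idea: it is what makes the inner $\ell_r$-structure visible and the normalizations match, and it is the step your sketch never pins down.
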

\begin{proof}
  Take $h$ to be a nonzero smooth function with Fourier support $B(0,10^{-10})$.
  We choose a sequence $\{\xi_j\}_{j=0}^{\infty}$ of $\mathbb{Z}^n$ such that $\Delta_jh_j=h_j$ and $\Delta_lh_j=0$ for $j\in \mathbb{N}$ and $i\neq j$,
  where $\widehat{h_j}(\cdot)=\widehat{h}(\cdot-\xi_j)$. Obviously, we have $\xi_j\sim 2^j$.
  Denote
  \begin{equation}
    H=\sum_{j=0}^na_jh_j,
  \end{equation}
  where $\{a_j\}_{j=0}^{\infty}$ is a truncated sequence of nonnegative real number.
  We have
  \begin{equation}
    \|H\|_{F_{p,r}}=\left\|\left(\sum_{j=0}^{\infty}|a_j|^{r}|h_j|^r\right)^{1/r}\right\|_{L_p}
    =
    \left\|\left(\sum_{j=0}^{\infty}|a_j|^{r}|h|^r\right)^{1/r}\right\|_{L_p}
    \sim
    \|\{a_j\}\|_{l_r^{0,1}}.
  \end{equation}
  On the other hand,
  \begin{equation}
  \begin{split}
    \|H\|_{M_{p,q}}=\left(\sum_{k\in \mathbb{Z}^n}\|\Box_kH\|^q_{L_p}\right)^{1/q}
    \sim &
    \left(\sum_{j=0}^{\infty}|a_j|^{q}\|h_j\|_{L_p}^q\right)^{1/q}
    \\
    \sim &
    \left(\sum_{j=0}^{\infty}|a_j|^{q}\|h\|_{L_p}^q\right)^{1/q}
    \sim
    \|\{a_j\}\|_{l_q^{0,1}}.
  \end{split}
  \end{equation}
  Thus, $M_{p,q}\subset F_{p,r}$ implies
  \begin{equation}
    \|\{a_j\}\|_{l_r^{0,1}}\sim \|H\|_{F_{p,r}}\lesssim \|H\|_{M_{p,q}}\sim \|\{a_j\}\|_{l_q^{0,1}},
  \end{equation}
  which implies the desired conclusion.
\end{proof}

We recall a lemma for the local property of $M_{p,q}$, one can see a proof for $1\leq p,q\leq \infty$ in \cite{change of variable}.
The proof for $0< p,q\leq \infty$ is similar and we omit the details here.
\begin{lemma}\label{lemma, localization}
Let $0<p,q\leq \infty$, $r>0$, and $f$ be a tempered distribution supported on $B(x_0, r)$ for some $x_0\in \mathbb{R}^n$.
Then $f\in M_{p,q}$ if and only if $\widehat{f}\in  L_q$. Moreover, we have
\begin{equation}
  \|f\|_{M_{p,q}}\sim \|\widehat{f}\|_{L_q}
\end{equation}
in this case.
\end{lemma}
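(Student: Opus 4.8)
The plan is to exploit the independence of the modulation norm from the choice of window in the short--time Fourier transform, and to work with a window that is identically $1$ on a neighbourhood of $\mathrm{supp}\,f$. After replacing $f$ by $T_{-x_0}f$ --- which changes neither $\|f\|_{M_{p,q}}$, by the translation invariance of $M_{p,q}$, nor $\|\widehat f\|_{L_q}$, since the translation only multiplies $\widehat f$ by a unimodular factor --- I may assume $x_0=0$, so $\mathrm{supp}\,f\subset B(0,r)$; by the Paley--Wiener--Schwartz theorem $\widehat f$ is then a smooth function of polynomial growth, in particular a genuine function, so the condition ``$\widehat f\in L_q$'' makes sense. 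I would fix $\phi\in C_c^\infty(\mathbb R^n)\subset\mathscr S$, real, with $0\le\phi\le 1$, $\phi\equiv 1$ on $B(0,2r)$ and $\mathrm{supp}\,\phi\subset B(0,4r)$. Two elementary observations then do the work: if $|x|\le r$ then $\phi(\cdot-x)\equiv 1$ on $B(0,r)\supset\mathrm{supp}\,f$, so $V_\phi f(x,\xi)=\widehat f(\xi)$ for every $\xi$ (one is simply reading off the Fourier transform of the compactly supported distribution $f$ through the admissible cut--off $\phi(\cdot-x)$); and if $|x|\ge 5r$ then $\mathrm{supp}\,\phi(\cdot-x)\cap B(0,r)=\varnothing$, so $V_\phi f(x,\xi)=0$. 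Hence $x\mapsto V_\phi f(x,\xi)$ is supported in $B(0,5r)$ and equals the constant $\widehat f(\xi)$ on $B(0,r)$. Restricting the inner integral to $B(0,r)$ gives at once, for every $0<p\le\infty$,
\[
\|f\|_{M_{p,q}}=\big\|\,\|V_\phi f(\cdot,\xi)\|_{L_{x,p}}\,\big\|_{L_{\xi,q}}\ \ge\ |B(0,r)|^{1/p}\,\|\widehat f\|_{L_q},
\]
which already yields both $f\in M_{p,q}\Rightarrow\widehat f\in L_q$ and the bound $\|\widehat f\|_{L_q}\lesssim\|f\|_{M_{p,q}}$.

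For the reverse estimate I would start from the ``fundamental identity of time--frequency analysis'' (Fourier inversion in the first variable, i.e.\ Parseval applied to $f(\cdot)e^{-2\pi i(\cdot)\xi}$ and $\phi(\cdot-x)$),
\[
V_\phi f(x,\xi)=\int_{\mathbb R^n}\widehat f(\eta+\xi)\,\overline{\widehat\phi(\eta)}\,e^{2\pi i x\cdot\eta}\,d\eta ,
\]
which, combined with $\mathrm{supp}_x V_\phi f(\cdot,\xi)\subset B(0,5r)$, gives for every $0<p\le\infty$
\[
\|V_\phi f(\cdot,\xi)\|_{L_{x,p}}\le |B(0,5r)|^{1/p}\sup_x|V_\phi f(x,\xi)|\le |B(0,5r)|^{1/p}\,\big(|\widehat f|*\Theta\big)(\xi),
\]
where $\Theta(v):=|\widehat\phi(-v)|$ is nonnegative and rapidly decreasing, $\Theta(v)\lesssim_N(1+|v|)^{-N}$ for every $N$. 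Taking the $L_q$--norm in $\xi$, it remains to show $\big\|\,|\widehat f|*\Theta\,\big\|_{L_q}\lesssim\|\widehat f\|_{L_q}$. For $1\le q\le\infty$ this is Young's inequality, since $\Theta\in L^1$. For $0<q<1$ Young fails, and I would use that $\widehat f$ is band--limited, $\mathrm{supp}\,\mathscr F\widehat f=-\mathrm{supp}\,f\subset B(0,r)$: choosing $N$ large one bounds $(|\widehat f|*\Theta)(\xi)$ by a constant multiple of the Peetre maximal function $\sup_{z}(1+|z|)^{-\tau}|\widehat f(\xi-z)|$, and the standard Peetre maximal inequality for functions with Fourier support in the fixed ball $B(0,r)$ (valid once $\tau>n/q$; see, e.g., \cite{Triebel_book_1983}) gives $\big\|\,|\widehat f|*\Theta\,\big\|_{L_q}\lesssim\|\widehat f\|_{L_q}$. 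Hence $\|f\|_{M_{p,q}}\lesssim\|\widehat f\|_{L_q}$, so $\widehat f\in L_q\Rightarrow f\in M_{p,q}$, and together with the first paragraph this proves $\|f\|_{M_{p,q}}\sim\|\widehat f\|_{L_q}$.

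The main obstacle is precisely the quasi--Banach range $0<q<1$: the inequality $\|g*\Theta\|_{L_q}\lesssim\|g\|_{L_q}$ is \emph{false} for a general $g\in L^q$ when $q<1$ (take $g=\mathbf 1_{B(0,\varepsilon)}$ and let $\varepsilon\to 0$), so the compact support of $f$ --- equivalently the band--limitedness of $\widehat f$ --- has to enter in an essential way, through a Peetre--type maximal estimate; in the range $1\le p,q\le\infty$ everything collapses to Young's inequality and one recovers the argument of \cite{change of variable}. Two small points are worth keeping in mind: the window must be chosen compactly supported (rather than, say, a Gaussian), so that $V_\phi f(\cdot,\xi)$ has a fixed compact support in $x$ uniformly in $\xi$; and the hypothesis that $f$ has compact support is exactly what makes $\widehat f$ a function, so that the right--hand side of the claimed equivalence is well defined.
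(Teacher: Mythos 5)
Your proof is correct, and it is worth noting that the paper itself does not actually prove this lemma: it only cites \cite{change of variable} for the Banach range $1\leq p,q\leq\infty$ and asserts that ``the proof for $0<p,q\leq\infty$ is similar''. Your argument is the natural one behind that citation (compactly supported smooth window, so that $V_\phi f(\cdot,\xi)$ has fixed compact $x$-support and equals $\widehat f(\xi)$ for $x$ near the origin, giving the lower bound; the convolution bound $|V_\phi f(x,\xi)|\leq(|\widehat f|*|\widehat\phi(-\cdot)|)(\xi)$ giving the upper bound), but you supply exactly the detail the paper waves away: for $0<q<1$ Young's inequality is unavailable, and you correctly replace it by the Peetre maximal inequality for functions whose Fourier transform is supported in the fixed ball $B(0,r)$ (here $\mathscr F\widehat f=f(-\cdot)$), with $\tau>n/q$, so the constants depend on $r$ --- which is consistent with the statement and with how the lemma is used (together with a dilation argument) in Proposition \ref{proposition, estiamte of atom}. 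Two minor points to tidy up. First, with $\phi\equiv1$ only on the open ball $B(0,2r)$ the identity $V_\phi f(x,\xi)=\widehat f(\xi)$ can fail at the boundary configuration $|x|=|y|=r$; take $\phi\equiv1$ on $\overline{B(0,3r)}$ (and $\mathrm{supp}\,\phi\subset B(0,4r)$, adjusting $5r$ to $6r$) so that $\phi(\cdot-x)=1$ on a genuine neighbourhood of $\mathrm{supp}\,f$ for all $|x|\leq r$. Second, you implicitly use that for $0<p,q\leq\infty$ the continuous (STFT) quasi-norm with your particular Schwartz window is equivalent to the discrete quasi-norm $M_{p,q}$ used throughout the paper; this is exactly the content of Remark 2.5 (window-independence and equivalence of the two definitions in the quasi-Banach range), so it is legitimate here, but it deserves an explicit mention since for $p,q<1$ window-independence is not automatic for arbitrary window classes.
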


\begin{proposition}\label{proposition, estiamte of atom}
Let $0<p\leq 1$. We have the following inclusion relation:
\begin{equation}
  F_{p,\infty}^{n(2/p-1)}\subset M_{p,p}.
\end{equation}
\end{proposition}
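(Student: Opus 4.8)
The plan is to prove the embedding $F_{p,\infty}^{n(2/p-1)}\subset M_{p,p}$ by using the atomic decomposition for Triebel--Lizorkin spaces (Lemma~\ref{lemma, atom}), applied with the smoothness parameter $s=n(2/p-1)$ and $q=\infty$. By that lemma, any $f\in F_{p,\infty}^{n(2/p-1)}$ can be written as $f=\sum_j(\mu_ja_j+\lambda_jg_j)$ with $(\sum_j|\mu_j|^p+|\lambda_j|^p)^{1/p}\lesssim\|f\|_{F_{p,\infty}^{n(2/p-1)}}$, where $a_j$ are $s$-atoms and $g_j$ are $(s,p,\infty)$-atoms. Since $0<p\le 1$, the quasi-norm $\|\cdot\|_{M_{p,p}}$ is $p$-subadditive, so it suffices to establish the uniform atom estimates
\begin{equation}
  \|a\|_{M_{p,p}}\lesssim 1\qquad\text{and}\qquad \|g\|_{M_{p,p}}\lesssim 1
\end{equation}
for every $s$-atom $a$ and every $(s,p,\infty)$-atom $g$; then $\|f\|_{M_{p,p}}^p\lesssim\sum_j(|\mu_j|^p\|a_j\|_{M_{p,p}}^p+|\lambda_j|^p\|g_j\|_{M_{p,p}}^p)\lesssim\|f\|_{F_{p,\infty}^{n(2/p-1)}}^p$.

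For the $s$-atom bound, $a$ is supported in $5Q$ for some unit cube $Q=Q_{0k}$ and satisfies $|D^\alpha a|\le 1$ for $|\alpha|\le K$. Since $a$ is supported in a ball of fixed radius, Lemma~\ref{lemma, localization} (translation-invariance of $M_{p,p}$ together with that local characterization) gives $\|a\|_{M_{p,p}}\sim\|\widehat a\|_{L_p}$, and $\|\widehat a\|_{L_p}$ is controlled by finitely many $L^2$-norms of derivatives of $a$ via the Bernstein multiplier theorem (or directly: $\langle\xi\rangle^{N}\widehat a\in L^2$ for $N$ large because $D^\alpha a\in L^2$ uniformly, then Cauchy--Schwarz), which are all $\lesssim1$ by the atom normalization, provided $K$ is large enough — which it is, since $K\ge([s]+1)_+$ grows with $n/p$. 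The serious work is the $(s,p,\infty)$-atom bound, which is the main obstacle. Here $g=\sum_{(\mu,l)<(\nu,k)}d_{\mu l}a_{\mu l}$ where each $a_{\mu l}$ is a $(Q_{\mu l},s,p,\infty)$-atom supported in $5Q_{\mu l}$ with $|D^\alpha a_{\mu l}|\le|Q_{\mu l}|^{-1/\infty+s/n-|\alpha|/n}=|Q_{\mu l}|^{s/n-|\alpha|/n}$ and vanishing moments up to order $L$, and the coefficients satisfy $\sup_{(\mu,l)}|d_{\mu l}|\le|Q_{\nu k}|^{-1/p}$ (the $\ell^\infty$ version of the coefficient condition). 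All these cubes $Q_{\mu l}$ with $(\mu,l)<(\nu,k)$ lie inside $2Q_{\nu k}$, a fixed dilate of one cube; after translating we may assume this cube is centered at the origin.

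The strategy for $g$ is again to use Lemma~\ref{lemma, localization}: $g$ is supported in a bounded (dilated) copy of $2^{-\nu}\mathbb{Z}^n$-scale cube $2Q_{\nu k}$, so $\|g\|_{M_{p,p}}\sim\|\widehat g\|_{L_p}$ — but now one must be careful about the scale $2^{-\nu}$, so I would first rescale to reduce to $\nu=0$, tracking how the atom bounds, the moment conditions, and the $M_{p,p}$ quasi-norm transform under dilation $x\mapsto 2^{-\nu}x$. After rescaling, $g$ is supported in a fixed ball, and I estimate $\widehat g$ in $L^p$ by splitting frequency space: for low frequencies $|\xi|\lesssim1$ one uses the vanishing moments of the building-block atoms to gain decay $|\widehat{a_{\mu l}}(\xi)|\lesssim(2^{-\mu}|\xi|)^{L+1}\cdot(\text{size factor})$, while for high frequencies one uses the smoothness bound $|D^\alpha a_{\mu l}|$ up to order $K$ to get decay $|\widehat{a_{\mu l}}(\xi)|\lesssim(2^{-\mu}|\xi|)^{-K}\cdot(\text{size factor})$; summing over $\mu\ge0$ (with $|\{l:(\mu,l)<(\nu,k)\}|\sim2^{\mu n}$ for the rescaled picture) and using $\sup|d_{\mu l}|\lesssim1$, the choices $K\ge([s]+1)_+$ and $L\ge[n(1/p-1)_+-s]$ are exactly what make the resulting geometric series in $\mu$ converge and produce the bound $\|\widehat g\|_{L_p}\lesssim1$; the precise exponent $s=n(2/p-1)$ is what balances these two regimes and the $2^{\mu n}$ cardinality factor against the $L_p$ integration. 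The main obstacle is this two-sided frequency estimate on the superposition $g$ — keeping track of the interplay among the scale $2^{-\mu}$, the number of cubes at each scale, the atom size normalization $|Q_{\mu l}|^{s/n}$, and the $L_p$ (rather than $L_1$ or $L_2$) norm, and verifying that $K$ and $L$ as prescribed in \eqref{for proof, definition of atom, 2} suffice.
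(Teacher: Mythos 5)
Your overall frame (atomic decomposition via Lemma \ref{lemma, atom} with $s=n(2/p-1)$, $q=\infty$, $p$-subadditivity of $\|\cdot\|_{M_{p,p}}^p$, uniform bounds for $s$-atoms via Lemma \ref{lemma, localization} plus Bernstein) is exactly the paper's, and your $s$-atom estimate is fine. The gap is in the $(s,p,\infty)$-atom estimate, which you yourself flag as the main obstacle: your plan to bound $\|\widehat g\|_{L_p}$ by summing pointwise Fourier-side estimates of the individual sub-atoms $a_{\mu l}$ does not converge at the exponent $s=n(2/p-1)$, because it discards the spatial almost-orthogonality of the $\sim 2^{(\mu-\nu)n}$ sub-atoms living at a fixed scale $\mu$. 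Concretely (after your rescaling to $\nu=0$, so $|d_{\mu l}|\lesssim 1$ and there are $\sim 2^{\mu n}$ atoms at scale $\mu$): each $\widehat{a_{\mu l}}$ has height $\lesssim 2^{-\mu(n+s)}$ and effective support $|\xi|\lesssim 2^{\mu}$, so $\|\widehat{a_{\mu l}}\|_{L_p}^p\sim 2^{-\mu(n+s)p+\mu n}$; using $p$-subadditivity over all sub-atoms gives per-scale contribution $2^{\mu n}\cdot 2^{-\mu(n+s)p+\mu n}=2^{0}$ (since $(n+s)p=2n$), a non-convergent series for every $p\le 1$, while using the pointwise triangle inequality over $l$ (paying the cardinality $2^{\mu n}$) gives per-scale contribution $2^{-\mu n(1-p)}$, which still diverges at the endpoint $p=1$ that the proposition includes. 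The moment conditions cannot repair this: they only improve $\widehat{a_{\mu l}}$ for $|\xi|\lesssim 2^{\mu}$ near the origin, whereas the divergence comes from the bulk region $|\xi|\sim 2^{\mu}$, so the ``geometric series in $\mu$'' you invoke is not actually geometric under either summation scheme.

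The paper closes exactly this hole by grouping the sub-atoms by scale, $g_\tau=\sum_{(\tau,l)<(\nu,k)}d_{\tau l}a_{\tau l}$, and exploiting the finite overlap of the supports $5Q_{\tau l}$ at fixed $\tau$ to get a cardinality-free bound $\|\partial^{\gamma}g_\tau\|_{L_\infty}\lesssim |Q_{\nu k}|^{-1/p}2^{-\tau(s-|\gamma|)}$; since $g_\tau$ is supported in a fixed dilate of $Q_{\nu k}$, this yields $\|\partial^{\gamma}g_\tau\|_{L_2}\lesssim |Q_{\nu k}|^{1/2-1/p}2^{-\tau(s-|\gamma|)}$, and then Lemma \ref{lemma, localization} together with the Bernstein multiplier theorem (plus a dilation to handle the integer number of derivatives) gives $\|g_\tau\|_{M_{p,p}}\lesssim |Q_{\nu k}|^{1/2-1/p}2^{-\tau n(1/p-1/2)}$. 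It is this $L^2$-based step that encodes the orthogonality gain: the $p$-th powers now sum like $\sum_{\tau\ge\nu}2^{-\tau n(1-p/2)}$, a genuinely geometric series for all $p\le 1$, giving $\|g\|_{M_{p,p}}\lesssim 1$ uniformly (and, notably, without ever using the vanishing moments). To make your proof work you would need to inject this finite-overlap/$L^2$ ingredient (or an equivalent square-function/orthogonality argument on the Fourier side); as written, the key summation step fails precisely at the exponent the proposition asserts.
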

\begin{proof}
We first verify
\begin{equation}
  \|a\|_{M_{p,p}}\lesssim 1
\end{equation}
for any $n(2/p-1)$-atom $a$.
Take $a$ to be a $n(2/p-1)$-atom as in Definition \ref{definition, atom} (with $s=n(2/p-1)$).
Observing that $|K|\geq [n(2/p-1)]+1\geq [n(1/p-1/2)]+1$, we have
\begin{equation}
  |\partial^{\gamma}a|\leq 1
\end{equation}
for $|\gamma|\leq [n(1/p-1/2)]+1$.
By Bernstein multiplier theorem and Lemma \ref{lemma, localization}, we have the following estimate of $a$,
\begin{equation}
  \begin{split}
    \|a\|_{M_{p,p}}
    \sim
    \|\mathscr{F}^{-1}f\|_{L_p}
    \lesssim
    \sum_{|\gamma|\leq [n(1/p-1/2)]+1}\|\partial^{\gamma}a\|_{L_2}\lesssim 1.
  \end{split}
\end{equation}
Next, we turn to the estimate of $(n(2/p-1),p,\infty)$-atom for $F_{p,\infty}^{n(2/p-1)}$.
By the definition \ref{definition, atom}, a $(n(2/p-1),p,\infty)$-atom $g$ can be represented by
\begin{equation}
  g=\sum_{(\mu,l)<(\nu,k)}d_{\mu l}a_{\mu l}(x)\hspace{10mm}(\text{convergence\ in\ }F_{p,\infty}^{n(2/p-1)})
\end{equation}
for some $\nu\in \mathbb{N}$ and $k\in \mathbb{Z}^n$, where $a_{\mu l}$ are $(Q_{\mu l},n(2/p-1),p,\infty)$-atoms and $d_{\mu l}$ are complex numbers with
     \begin{equation}
       \sup_{(\mu,l)<(\nu,k)}|d_{\mu l}|\leq |Q_{\nu k}|^{-1/p}.
     \end{equation}
For a fixed $\tau\leq \nu$, we denote
\begin{equation}
  g_{\tau}=\sum_{(\tau,l)<(\nu,k)}d_{\tau l}a_{\tau l}(x).
\end{equation}
Then, $g$ can be represented by
\begin{equation}
  g=\sum_{\tau\geq \nu}g_{\tau}\hspace{10mm}(\text{convergence\ in\ }F_{p,\infty}^{n(2/p-1)}).
\end{equation}
We now concentrate ourselves on the estimate of $g_{\tau}$.
By the definition \ref{definition, atom}, we have
\begin{equation}\label{for proof, theorem 2, 1}
  |\partial^{\gamma}a_{\tau l}|\leq |Q_{\tau l}|^{(2/p-1)-|\gamma|/n}=|2^{-\tau n}|^{(2/p-1)-|\gamma|/n}
\end{equation}
for $|\gamma|\leq [n(1/p-1/2)]+1\leq K$.
Recalling $\textbf{supp}a_{\tau l}\subset 5Q_{\tau l}$,
we use (\ref{for proof, theorem 2, 1}) and the almost orthogonality of $a_{\tau l}$ to deduce that
\begin{equation}
  \begin{split}
    |\partial^{\gamma}g_{\tau}|
    = &
    |\sum_{(\tau,l)<(\nu,k)}d_{\tau l}\partial^{\gamma}a_{\tau l}(x)|
    \\
    \lesssim &
    \sup_{(\tau,l)<(\nu,k)}|d_{\mu l}||2^{-\tau n}|^{(2/p-1)-\gamma/n}
    \\
    \lesssim &
    |Q_{\nu k}|^{-1/p}|2^{-\tau n}|^{(2/p-1)-\gamma/n}
  \end{split}
\end{equation}
for all $|\gamma|\leq [n(1/p-1/2)]+1$.
By Bernstein multiplier theorem and Lemma \ref{lemma, localization}, we deduce that
\begin{equation}
  \begin{split}
    \|g_{\tau}\|_{M_{p,p}}
    \sim &
    \|\mathscr{F}^{-1}g_{\tau}\|_{L_p}
    \\
    \lesssim &
    \sum_{|\gamma|\leq [n(1/p-1/2)]+1}\|\partial^{\gamma}g_{\tau}\|_{L_2}
    \\
    \lesssim &
    \sum_{|\gamma|\leq [n(1/p-1/2)]+1}|Q_{\nu k}|^{1/2-1/p}|2^{-\tau n}|^{(2/p-1)-\gamma/n}.
  \end{split}
\end{equation}
By a dilation argument, we have
\begin{equation}
 \|g_{\tau}\|_{M_{p,p}}\sim \|\mathscr{F}^{-1}g_{\tau}\|_{L_p}\lesssim |Q_{\nu k}|^{1/2-1/p}|2^{-\tau n}|^{1/p-1/2}.
\end{equation}
Thus,
\begin{equation}
  \begin{split}
    \|g\|^p_{M_{p,p}}
    =
    \|\sum_{\tau\geq \nu}g_{\tau}\|^p_{M_{p,p}}
    \leq &
    \sum_{\tau\geq \nu}\|g_{\tau}\|^p_{M_{p,p}}
    \\
    \leq &
    |Q_{\nu k}|^{p/2-1}\sum_{\tau\geq \nu}|2^{-\tau n}|^{1-p/2}
    \\
    \lesssim &
    |Q_{\nu k}|^{p/2-1}\cdot |2^{-\nu n}|^{1-p/2}\sim 1.
  \end{split}
\end{equation}
By Lemma \ref{lemma, atom}, we have
\begin{equation}
  \begin{split}
    \|f\|_{M_{p,p}}
    = &
    \|\sum_{j=1}^{\infty}(\mu_ja_j+\lambda_jg_j)\|_{M_{p,p}}
    \\
    \leq &
    \left(\|\sum_{j=1}^{\infty}(\mu_j^p\|a_j\|^p_{M_{p,p}}+\lambda_j^p\|g_j\|^p_{M_{p,p}})\right)^{1/p}
    \\
    \lesssim &
    \left(\|\sum_{j=1}^{\infty}\mu_j^p+\lambda_j^p\right)^{1/p}\lesssim \|f\|_{F_{p,\infty}^{n(2/p-1)}},
  \end{split}
\end{equation}
which is the desired conclusion.
\end{proof}

\medskip

Now we are in the position to prove Theorems 1.1 and 1.2.

\medskip

\hspace{-12pt}{\it Proof of Theorem 1.1.}  We divide this proof into two parts.

\medskip
\textbf{(Sufficiency):} \quad  For $1/p\leq 1/q$,  we have $1/r\leq 1/q$, and then $F_{p,q}\subset F_{p,r}$.
Using Lemma \ref{lemma, modulation and Besov spaces}, we obtain $M_{p,q}\subset B_{p,q}$.
Thus, we deduce that
\begin{equation}
  M_{p,q}\subset B_{p,q}\subset F_{p,q}\subset F_{p,r},
\end{equation}
which is the desired conclusion.

For $1/p> 1/q$, use $M_{p,p}\subset F_{p,p}$ obtained above to deduce that
\begin{equation}
  M_{p,q}^{n(1/p-1/q)+2\epsilon}\subset M_{p,p}^{\epsilon}\subset F_{p,p}^{\epsilon}\subset F_{p,r}
\end{equation}
for any $\epsilon>0$, $r\in (0, \infty]$.

\medskip

{\bf (Necessity):}\quad For $1/p\leq 1/q$, using Proposition \ref{proposition, necessity, 1} and Lemma \ref{lemma, Sharpness of embedding, for uniform decomposition},
we deduce that $l_q^{s,0}\subset l_p^{0,0}$, which implies $s\geq 0$.

On the other hand, if $M_{p,q}\subset F_{p,r}$ holds, we use Proposition \ref{proposition, necessity, special} and Lemma \ref{lemma, Sharpness of embedding, for uniform decomposition} to deduce that $1/r\leq 1/q$.

For $1/p> 1/q$, we use Proposition \ref{proposition, necessity, 1} and Lemma \ref{lemma, Sharpness of embedding, for dyadic decomposition} to deduce
$l_q^{s,0}\subset l_p^{0,0}$, which implies $s> n(1/p-1/q)$.$\hfill\Box$

\bigskip

\hspace{-12pt}{\it Proof of Theorem 1.2.} We divide this proof into two parts.

\medskip
\textbf{(Sufficiency):}  For $1/q\leq 1/p$, by Lemma \ref{lemma, modulation and Besov spaces}, we obtain $ B_{p,\infty} \subset M_{p,\infty}^{n(1-1/p)}$.
Using the fact $F_{p,\infty}\subset B_{p,\infty}$, we deduce that
\begin{equation}\label{for proof, theorem 2, 2}
  F_{p,\infty}\subset M_{p,\infty}^{n(1-1/p)}.
\end{equation}

In addition, we have $F_{p,\infty}^{n(2/p-1)}\subset M_{p,p}$ by Proposition \ref{proposition, estiamte of atom}.
By potential lifting, we obtain
\begin{equation}\label{for proof, theorem 2, 3}
  F_{p,\infty}\subset M_{p,p}^{n(1-2/p)}.
\end{equation}
Thus, the desired conclusion can be deduced by a standard interpolation argument between (\ref{for proof, theorem 2, 2}) and (\ref{for proof, theorem 2, 3}).

For $1/p> 1/q$, recalling $F_{p,\infty}\subset M_{p,p}^{n(1-2/p)}$ obtained in Proposition \ref{proposition, estiamte of atom},
we deduce that
\begin{equation}
  F_{p,r}\subset F_{p,\infty}\subset M_{p,p}^{n(1-2/p)}\subset M_{p,q}^{n(1-1/p-1/q)-\epsilon}
\end{equation}
for any $\epsilon>0$, $r\in (0, \infty]$.
\medskip

\textbf{(Necessity):} We use Proposition \ref{proposition, necessity, 1} to deduce that inclusion relation $l_p^{n(1-1/p),1}\subset l_q^{s+n/q,1}$.
Then, Lemma \ref{lemma, Sharpness of embedding, for dyadic decomposition} yields that $s\leq n(1-1/p-1/q)$ for $1/q\le 1/p$, while the inequality is strict for $1/q>1/p$. $\hfill\Box$

\bigskip

\hspace{-14pt}{\Large\bf An open problem and its difficulties}: Inspired by the article \cite{sobolev and modulation}, one may ask a natural question: can we improve and extend the embedding results in \cite{sobolev and modulation}
  to the more general frame of Triebel-Lizorkin spaces? More exactly, can we establish the sharp conditions for the following two embedding relations:
  \begin{equation}
    (i)\quad M_{p,q}^{s} \subset F_{p,r},\hspace{10mm} (ii)\quad F_{p,r} \subset M_{p,q}^{s}
  \end{equation}
 for $1<p<\infty$, $0<r,q\leq \infty$, $s\in \mathbb{R}$? A key problem is how to find the optimal $r$ for the embedding relations $F_{p,r} \subset M_{p,q}^{n(1-1/p-1/q)}$ in the set
   $P=:\{(p,q): 1<p<2$, $1-1/p<1/q\leq 1/p\}$.
  Unfortunately, due to the following two reasons, it is quite hard to establish such conditions.
  \begin{enumerate}
     \item
     {\bf The ``useless" of complex interpolation:}  Interpolation argument is quite useful in $L_p (p>1)$ case as in \cite{sobolev and modulation},
     while the situation become more complicated in the framework of Triebel-Lizorkin space.
     In this paper, we obtain
     \begin{equation}\label{for remark, 1}
       F_{p,\infty} \subset M_{p,q}^{n(1-1/p-1/q)},\hspace{6mm} p\leq 1, 0\leq 1/q\leq 1/p.
     \end{equation}
      Obviously, $r=\infty$ is optimal in this case.
     However, when $p=q=2$, the optimal $r$ is $2$ rather than $\infty$. In fact, in the embedding relation
     \begin{equation}\label{for remark, 2}
       F_{2,2}\subset M_{2,2},
     \end{equation}
      $r=2$ is optimal.
     If we apply complex interpolation between (\ref{for remark, 1}) and (\ref{for remark, 2}), we only have
     \begin{equation}\label{for remark, 3}
       F_{p,p/(p-1)} \subset M_{p,q}^{n(1-1/p-1/q)}, \hspace{6mm}(p,q)\in P
     \end{equation}
     Now, a new question is: how to determine whether $r=p/(p-1)$ is the optimal one in above embedding relation (\ref{for remark, 3})?
     If $r=p/(p-1)$ is not the optimal one, we have to establish the inclusion relation $F_{p,r} \subset M_{p,q}^{n(1-1/p-1/q)}$ for some $r>p/(p-1)$, $p,q\in P$ directly (without the help of interpolation).
     However,  we face new difficulties as follows.
     \item
    {\bf The lack of appropriate atoms:} In this paper, we use an atom decomposition for inhomogeneous Triebel-Lizorkin spaces (see Lemma \ref{lemma, atom}).
     However, this atom decomposition only works for $0<p\leq 1<q\leq \infty$. When we handle the case $p>1$, the lack of appropriate atoms leads to new difficulties.
  \end{enumerate}


\end{document}